\newtheorem{theorem}{Theorem}[section]
\newtheorem{lemma}[theorem]{Lemma}
\newtheorem{proposition}[theorem]{Proposition}
\newenvironment{proof}[1][Proof]{\begin{trivlist}
\item[\hskip \labelsep {\bfseries #1}]}{\end{trivlist}}
\newenvironment{definition}[1][Definition]{\begin{trivlist}
\item[\hskip \labelsep {\bfseries #1}]}{\end{trivlist}}
\newcommand{\qed}{\nobreak \ifvmode \relax \else
      \ifdim\lastskip<1.5em \hskip-\lastskip
      \hskip1.5em plus0em minus0.5em \fi \nobreak
      \vrule height0.75em width0.5em depth0.25em\fi}
\title{The Lucas Numbers and Other Gibonacci Sequences Mod m}
\author{Jeremiah T. Southwick \\ Le Moyne College \\ southwjt@lemoyne.edu}
\begin{document}
\maketitle

\pagebreak

\begin{abstract}
We study Gibonacci sequences mod $m$, giving special attention to the Lucas numbers. It is known which $m$ have the property that the Fibonacci sequence contains all residues mod $m$. When $m$ has this property, we say that the Fibonacci sequence is complete mod $m$. We extend this work to all Gibonacci sequences, concluding by determining the set of $m$ in which a generic Gibonacci sequence containing the relatively prime consecutive terms $a, b$ is complete mod $m$.
\end{abstract}

\pagebreak

\tableofcontents

\pagebreak

\section{Introduction \& Definitions}

This paper is the result of a senior research project conducted in the summer and fall of 2013. All work was done independently of the similar result in ~\cite{Lang2013}.

Burr~\cite{Burr71} has completely categorized which moduli $m$ have the property that the Fibonacci sequence contains all residues mod $m$. If $m$ has this property, he says that the Fibonacci sequence is complete mod $m$, and if $m$ does not have this property, he says the Fibonacci sequence is defective mod $m$. The set of $m$ in which the Fibonacci sequence is complete is $\{5^k, 2 \cdot 5^k, 4 \cdot 5^k, 3^j \cdot 5^k, 6 \cdot 5^k, 7 \cdot 5^k, 14 \cdot 5^k\}$ where $k \geq 0$ and $j \geq 1$. The goal of this paper is to extend the idea of completeness to any numerical sequence satisfying the Fibonacci relation, to obtain a categorization of which $m$ have the property that a generic sequence contains all residues mod $m$. We begin by defining these sequences, which we call Gibonacci sequences, for generalized Fibonacci.

\begin{definition}
A \emph{Gibonacci sequence} is the numerical sequence $\{G_n(a, b)\}$ where $a, b \in \mathbb{Z}$, satisfying the three following conditions:
\begin{enumerate}
  \item $\gcd(a, b) = 1$.
  \item $G_1(a, b) = a$, $G_2(a, b) = b$.
  \item $G_{n+1}(a, b) = G_{n-1}(a, b) + G_n(a, b)$.
\end{enumerate}
\end{definition}

The Fibonacci sequence (\{1, 1, 2, 3, 5, 8, ...\}) is the Gibonacci sequence $\{G_n(1, 1)\}$, and the Lucas numbers (\{1, 3, 4, 7, 11, 18, ...\}) are the Gibonacci sequence $\{G_n(1, 3)\}$. We will reserve $F_n$ and $L_n$ to denote these sequences, respectively.

We require the condition that $a$ and $b$ be relatively prime so that the set of Gibonacci sequences does not contain any sequences which are multiples of each other. Without this requirement we would include sequences like \{2, 2, 4, 6, 10, ...\}, which is just the Fibonacci sequence multiplied by 2. But that sequence would automatically be defective mod 2, since no odd number appears. By requiring $\gcd(a,b) = 1$, we remove that possibility, making for more concise statements throughout the paper.

As with many other mathematical concepts, when a Gibonacci sequence is considered mod $m$, it becomes cyclical. Since this proposition is assumed as true in the existing literature we have studied, we present our own proof of the fact here.

\begin{proposition}
\label{GibCyclical}
Gibonacci sequences are cyclical mod $m$.
\end{proposition}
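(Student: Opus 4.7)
The plan is to use a pigeonhole argument on pairs of consecutive terms, together with the reversibility of the Gibonacci recurrence, to show that the sequence is in fact purely periodic (not merely eventually periodic) mod $m$.

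First I would consider, for each $n \geq 1$, the ordered pair $(G_n(a,b), G_{n+1}(a,b)) \bmod m$. There are only $m^2$ such pairs, so by the pigeonhole principle there exist indices $1 \leq i < j$ with
\[
(G_i(a,b), G_{i+1}(a,b)) \equiv (G_j(a,b), G_{j+1}(a,b)) \pmod{m}.
\]
Because the recurrence $G_{n+1}=G_{n-1}+G_n$ determines every subsequent term from any two consecutive terms, congruence of one pair forces congruence of every later pair: $G_{n+j-i}(a,b) \equiv G_n(a,b) \pmod m$ for all $n \geq i$. This already gives eventual periodicity with period dividing $j-i$.

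To upgrade this to pure periodicity (periodicity starting at $n=1$), I would use the fact that the recurrence is reversible: $G_{n-1}(a,b) = G_{n+1}(a,b) - G_n(a,b)$. Thus from $(G_i, G_{i+1}) \equiv (G_j, G_{j+1}) \pmod m$ one can step backwards and conclude
\[
(G_{i-1}(a,b), G_i(a,b)) \equiv (G_{j-1}(a,b), G_j(a,b)) \pmod{m},
\]
and iterating this descent $i-1$ times yields $(G_1(a,b), G_2(a,b)) \equiv (G_{j-i+1}(a,b), G_{j-i+2}(a,b)) \pmod m$. Running the recurrence forward from this congruence shows $G_{n+(j-i)}(a,b) \equiv G_n(a,b) \pmod m$ for every $n \geq 1$, which is exactly what it means for the sequence to be cyclical mod $m$.

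The main obstacle is essentially conceptual rather than computational: one must be careful to observe that the pigeonhole step alone only gives eventual periodicity, and the reversibility of the recurrence is what allows us to push the repetition all the way back to the initial pair $(a,b)$. The condition $\gcd(a,b)=1$ plays no role here, so the argument works for any two integer initial values.
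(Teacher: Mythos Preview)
Your proof is correct and follows essentially the same approach as the paper: a pigeonhole argument on consecutive pairs mod $m$, the forward recurrence to propagate the repetition, and the reverse recurrence $G_{n-1}=G_{n+1}-G_n$ to pull the periodicity back to the initial pair $(a,b)$. Your write-up is in fact a bit crisper about the distinction between eventual and pure periodicity than the paper's own version.
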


\begin{proof}
Since there are a finite number of residues mod $m$, there are a finite number of ways to place them in pairs next to each other. Thus in the infinite string of residues representing a Gibonacci sequence mod $m$, some pair $G_k$ (mod $m$), $G_{k+1}$ (mod $m$) must appear more than once. Therefore we know a Gibonacci sequence mod $m$ takes the form \{..., $G_k$ (mod $m$), $G_{k+1}$ (mod $m$), ..., $G_r$ (mod $m$), $G_{r+1}$ (mod $m$), ...\} where $G_k$ (mod $m$) $\equiv$ $G_r$ (mod $m$) and $G_{k+1}$ (mod $m$) $\equiv$ $G_{r+1}$ (mod $m$). By definition, $G_{n+1} = G_{n-1} + G_n$, so beginning  with $G_k$ (mod $m$) and $G_{k+1}$ (mod $m$) we can follow this rule until our adding brings us to $G_r$ (mod $m$), $G_{r+1}$ (mod $m$). But these are the first two numbers we began our addition with, so any additional numbers will just be repetitions of the string we have already generated. We can also go backwards from $G_k$ (mod $m$) and $G_{k+1}$ (mod $m$) with the reverse Fibonacci relation, $G_{n-1} = G_{n+1} - G_n$, deriving the same string in reverse order until we reach $G_2$ (mod $m$) and $G_1$ (mod $m$), since $G_{k-h} \equiv G_{r-h}$ (mod $m$) for $0 \leq h \leq k$. In this way, we can see that the entirety of $\{G_n(a,b)\}$ mod $m$ is the same string repeated. Thus Gibonacci sequences are cyclical mod $m$. $\blacksquare$ 
\end{proof}

Because of this property, it is convenient to define Gibonacci cycles mod $m$.

\begin{definition}
We say the finite sequence of integers $(i_1, i_2, ..., i_{h-1}, i_h)$ (mod $m$) is a \emph{Gibonacci cycle mod m} if it satisfies the following conditions:

\begin{enumerate}
  \item $i_{n+2} \equiv i_n + i_{n+1}$ (mod $m$) for $n = 1, 2, ..., h-2$.
  \item $i_{h-1} + i_h \equiv i_1$ (mod $m$) and $i_h + i_1 \equiv i_2$ (mod $m$).
  \item No $n < h$ satisfies $i_{n-1} + i_n \equiv i_1$ (mod $m$) and $i_n + i_1 \equiv i_2$ (mod $m$).
\end{enumerate}
\end{definition}

Condition 1 ensures that the main body of the cycle satisfies the Fibonacci relation, condition 2 ensures that the same relation is maintained at the ends of the cycle, and condition 3 ensures that the cycle doesn't contain any sub-cycles within it.

For example, (1, 1, 2, 3, 1, 0) is a Gibonacci cycle mod 4. We can see that it corresponds to the Fibonacci sequence, because the Fibonacci sequence mod 4 is \{1, 1, 2, 3, 1, 0, 1, 1, ...\}.

We note that (0) is a Gibonacci cycle mod $m$ for all $m$. We refer to it as the \emph{trivial Gibonacci cycle}.

It is also important to note that not all Gibonacci cycles are necessarily derived from some Gibonacci sequence. For example, (2, 2, 4, 0, 4, 4, 2, 0) is a Gibonacci cycle mod 6, but because all $i_n$ within the cycle are divisible by 2, any sequence deriving this cycle will have only even numbers, meaning it will not satisfy our requirement that $\gcd(a,b) = 1$, and hence will not be a Gibonacci sequence.

We next define what it means for two Gibonacci cycles to be equivalent.

\begin{definition}
We say two Gibonacci cycles $(i_1, i_2, ..., i_{h-1}, i_h)$ (mod $m$) and $(j_1, j_2, ..., j_{h'-1}, j_{h'})$ (mod $m$) are \emph{equivalent} if

\begin{enumerate}
\item $h = h'$.
\item For some $0 \leq n, r \leq h$, $i_n = j_r$ and $i_{n+1 (mod \ h)} = j_{r+1 (mod \ h)}$.
\end{enumerate}
\end{definition}

Thus (1, 1, 2, 3, 1, 0) and (2, 3, 1, 0, 1, 1) are equivalent Gibonacci cycles mod 4, because they both have the same lengths and contain the same numbers, just shifted over in the cycle. We make this definition so that for each $m$, we can create the set of all inequivalent Gibonacci cycles mod $m$.

\begin{definition}
A \emph{complete Gibonacci system mod $m$} is the set of all inequivalent Gibonacci cycles mod $m$.
\end{definition}

For example, \{(1, 1, 0), (0)\} is the complete Gibonacci system mod 2. Complete Gibonacci systems are useful in our endeavors, as they allow us to consider all Gibonacci cycles mod $m$ simultaneously and hence all Gibonacci sequences mod $m$.

We depart here in our terminology from Burr, who used the phrases ``Fibonacci cycle mod $m$'' and ``complete Fibonacci system mod $m$'', but we mean the same thing as he did. We do this to reflect our differing usages of the words ``Fibonacci'' and ``Gibonacci''. We will use our phrases for the duration of the paper.

It was noted by Burr that the total number of terms (i.e., individual digits) appearing in a complete Gibonacci system mod $m$ is $m^2$. We present our own proof of this proposition here.

\begin{proposition}
\label{MSquared}
The total number of terms appearing in a complete Gibonacci system mod $m$ is $m^2$.
\end{proposition}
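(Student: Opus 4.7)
The plan is to partition the $m^2$ ordered pairs in $(\mathbb{Z}/m\mathbb{Z})^2$ according to which Gibonacci cycle contains them as a pair of cyclically consecutive terms. A cycle of length $h$ will contribute exactly $h$ such pairs, one starting at each position, so summing gives $\sum h = m^2$. Since $\sum h$ is precisely the total number of terms in the complete Gibonacci system, the proposition will follow.

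First I would establish existence: every pair $(a,b) \in (\mathbb{Z}/m\mathbb{Z})^2$ lies on some Gibonacci cycle mod $m$. Beginning with $(a,b)$ and iterating the Fibonacci recurrence $G_{n+1} = G_{n-1} + G_n$ forward and its reverse $G_{n-1} = G_{n+1} - G_n$ backward, the same finiteness argument used in Proposition \ref{GibCyclical} shows the orbit is cyclical, and stripping off repetitions yields a Gibonacci cycle whose defining conditions 1--3 are all met. In particular the trivial cycle $(0)$ accounts for the pair $(0,0)$.

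Next I would establish uniqueness: if a pair $(a,b)$ appears in cycles $C$ and $C'$, then shifting each so it begins with $i_1 = a$, $i_2 = b$ and applying the forward recurrence produces identical term-by-term sequences, so the cycles are equivalent. I would then show that within a single cycle $(i_1, \ldots, i_h)$ the $h$ cyclically consecutive pairs $(i_n, i_{n+1})$, with $i_{h+1} := i_1$, are pairwise distinct. Suppose $(i_n, i_{n+1}) \equiv (i_k, i_{k+1})$ with $1 \leq n < k \leq h$. Forward iteration from the coincident pair forces the sequence to have period $d := k - n < h$, so $(i_{d+1}, i_{d+2}) \equiv (i_1, i_2)$; equivalently $i_{d-1} + i_d \equiv i_1$ and $i_d + i_1 \equiv i_2$, which contradicts condition 3 of the Gibonacci cycle definition.

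The main obstacle I anticipate is careful handling of indices at the wrap-around point. The uniqueness and distinctness arguments both require treating $(i_h, i_1)$ as a genuine consecutive pair whose successor is $(i_1, i_2)$, which is exactly what condition 2 of the cycle definition provides; without invoking it the cycle structure would fail to close up and the counting would break. Once these bookkeeping details are settled, the equation (total number of terms) $= \sum_C h_C = m^2$ is immediate.
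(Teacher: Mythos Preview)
Your argument is correct and rests on the same bijection the paper uses: ordered pairs $(a,b)\in(\mathbb{Z}/m\mathbb{Z})^2$ correspond one-to-one with positions in the cycles of the complete Gibonacci system, so the total length is $m^2$. The paper phrases this operationally---starting from the $m^2$ pairs (carrying $2m^2$ terms) and ``merging'' $[a,b]$ with $[b,c]$ whenever $a+b\equiv c$, each merge deleting one term, so $m^2$ merges leave $m^2$ terms---whereas you phrase it as a direct partition and give more explicit care to existence, uniqueness, and the no-subcycle condition; the underlying content is the same.
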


\begin{proof}
If we form the set of all pairs $[a, b]$ where $0 \leq a, b \leq m-1$, there will be a total of $m^2$ pairs, representing the possible ways to place all least positive residues mod $m$ next to each other. These least positive residues are the terms in question, totaling $2m^2$ since there are 2 terms in each pair. We can combine these pairs to form cycles by merging $[a, b]$ and $[b, c]$ if $a + b \equiv c$ (mod $m$). In the case of the pair $[0, 0]$, we merge it with itself to form the trivial Gibonacci cycle (0). In all other cases, each pair will uniquely merge with one other pair since $a + b$ (mod $m$) equals a unique $c$ (mod $m$). Each time a merge is made, the number of terms in our set will be reduced by one. Because there are $m^2$ pairs, after all our merging is done we will have $m^2$ fewer terms than we began with. Therefore there are $2m^2 - m^2 = m^2$ total terms in a complete Gibonacci system mod $m$. $\blacksquare$
\end{proof}

Thus if we find a collection of inequivalent Gibonacci cycles mod $m$ totaling $m^2$ terms, we know it is a complete Gibonacci system mod $m$.

\section{Results for all Gibonacci sequences}

The most famous Gibonacci sequence is the Fibonacci sequence. Burr shows that this sequence is complete mod $m$ for all numbers of the form \{$5^k, 2 \cdot 5^k, 4 \cdot 5^k, 3^j \cdot 5^k, 6 \cdot 5^k, 7 \cdot 5^k, 14 \cdot 5^k$\} where $k \geq$ 0 and $j \geq$ 1, and is defective for all other numbers. We will extend these results to all Gibonacci sequences both by adapting Burr's results for the Fibonacci sequence to generic sequences and by establishing our own results. We begin by introducing the terminology of a \emph{multiple of the Fibonacci sequence mod $m$}.

\begin{definition}
We say that a Gibonacci sequence \{$G_n(a,b)$\} mod $m$ is a \emph{multiple of the Fibonacci sequence mod $m$} if there is some $k \in \mathbb{N}$ by which the Fibonacci cycle \\ $(F_1, F_2, …, F_{h-1}, F_h)$ (mod $m$) can be multiplied, so that $(k \cdot F_1, k \cdot F_2, …, k \cdot F_{h-1}, k \cdot F_h)$ (mod $m$) is equivalent to $(G_1, G_2, . . ., G_{h-1}, G_h)$ (mod $m$).
\end{definition}

We next show a useful application of this definition:

\begin{proposition}
\label{ZeroEquivMultiple}
There is a 0 in \{$G_n(a,b)$\} mod $m$ if and only if \{$G_n(a,b)$\} mod $m$ is a multiple of the Fibonacci sequence mod $m$.
\end{proposition}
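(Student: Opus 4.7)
The plan is to establish the biconditional by handling each direction separately; both follow cleanly from the Fibonacci recursion.

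For the $(\Leftarrow)$ direction, I would first observe that the Fibonacci cycle $(F_1, F_2, \ldots, F_h)$ mod $m$ always terminates in $0$. This is a direct consequence of condition 2 of the Gibonacci cycle definition applied to the Fibonacci cycle (which begins with $1, 1$): we need $F_h + F_1 \equiv F_2 \pmod m$, i.e., $F_h + 1 \equiv 1 \pmod m$, forcing $F_h \equiv 0 \pmod m$. Consequently, if $\{G_n(a,b)\}$ mod $m$ is equivalent as a cycle to $(k \cdot F_1, \ldots, k \cdot F_h) \pmod m$, then the entry $k \cdot F_h \equiv 0 \pmod m$ supplies a zero in the Gibonacci sequence mod $m$.

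For the $(\Rightarrow)$ direction, suppose $G_r \equiv 0 \pmod m$ for some $r$, and set $k \equiv G_{r+1} \pmod m$. I would show by induction on $n$ that $G_{r+n} \equiv k \cdot F_n \pmod m$ for all $n \geq 1$. The base cases read $G_{r+1} \equiv k = k F_1$ and $G_{r+2} = G_r + G_{r+1} \equiv 0 + k = k F_2$; the inductive step is immediate from the shared Fibonacci recursion, since $G_{r+n+1} = G_{r+n-1} + G_{r+n} \equiv k F_{n-1} + k F_n = k F_{n+1} \pmod m$. Reading the Gibonacci cycle starting at $G_{r+1}$ therefore yields $(kF_1, kF_2, \ldots, kF_h) \pmod m$, which is a cyclic shift of $(G_1, \ldots, G_h) \pmod m$ and hence equivalent to it, matching the definition of being a multiple of the Fibonacci sequence mod $m$.

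The main obstacle is a bookkeeping issue about cycle length: when $\gcd(k,m) > 1$, the sequence $(kF_1, kF_2, \ldots)$ could close up strictly before the full Pisano period $\pi(m)$, so one must check that the length $h$ appearing in the definition of ``multiple'' truly matches the length of the Gibonacci cycle. I would resolve this by invoking condition 3 (minimality) in the Gibonacci cycle definition: both cycles in question close at the least $h$ for which simultaneously $kF_h \equiv 0 \pmod m$ and $kF_{h+1} \equiv k \pmod m$, so the cycle lengths coincide and the equivalence is legitimate.
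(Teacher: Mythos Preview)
Your proof is correct and follows essentially the same route as the paper's: for $(\Rightarrow)$ both arguments take $k$ to be the term immediately following the zero and use the shared recursion to identify the Gibonacci cycle with $k$ times the Fibonacci cycle, and for $(\Leftarrow)$ both simply note that the Fibonacci cycle already contains a $0$ which survives multiplication by $k$. Your treatment is in fact more careful than the paper's, which does not address the cycle-length bookkeeping issue you raise in your final paragraph.
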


\begin{proof}
Suppose that 0 appears in the cycle $(G_1, G_2, …, G_{h-1}, G_h)$ (mod $m$). Then there is a number directly after it in the cycle (possibly wrapped around the ends), which we will call $k'$. After this number is $k'$, since 0 + $k' = k'$. If we multiply the Fibonacci cycle mod $m$ by this number $k'$, we will derive \{$G_n(a,b)$\} mod $m$, since the Fibonacci cycle mod $m$ contains the pair [1, 1] and because if $a + b \equiv c$ (mod $m$) then $k'a + k'b \equiv k'c$ (mod $m$). Thus \{$G_n(a,b)$\} mod $m$ is a multiple of the Fibonacci sequence mod $m$.

Conversely, suppose that \{$G_n(a,b)$\} mod $m$ is a multiple of the Fibonacci sequence mod $m$. Because the Fibonacci cycle mod $m$ contains a 0 (since [1, 1] is in the cycle and 0 + 1 = 1), we know that \{$G_n(a,b)$\} mod $m$ will contain a 0, since $0 \cdot k \equiv 0$ (mod $m$) for all $k$ and $m$. $\blacksquare$
\end{proof}

This proposition is significant, because it relates a generic Gibonacci sequence to the Fibonacci sequence. Hence we are able to use it to get results for all Gibonacci sequences. In fact, our next proposition will do just that.

\begin{proposition}
\label{FibToGib}
Suppose the Fibonacci sequence is defective mod $m$ for some $m$. Then all Gibonacci sequences are defective mod $m$.
\end{proposition}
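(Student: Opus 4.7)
The plan is to do a case split on whether the Gibonacci sequence $\{G_n(a,b)\}$ contains a $0$ mod $m$, and to reduce the nontrivial case to a multiplication-by-$k$ argument via Proposition~\ref{ZeroEquivMultiple}.

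First I would dispatch the easy case: if $\{G_n(a,b)\}$ mod $m$ contains no $0$, then $0$ is a residue not appearing in the sequence, so $\{G_n(a,b)\}$ is trivially defective mod $m$.

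So assume instead that $\{G_n(a,b)\}$ mod $m$ contains a $0$. By Proposition~\ref{ZeroEquivMultiple}, there is some $k$ such that the cycle of $\{G_n(a,b)\}$ mod $m$ is equivalent to $(k F_1, k F_2, \ldots, k F_h)$ mod $m$, where $(F_1,\ldots,F_h)$ is the Fibonacci cycle mod $m$. Thus the set $S_G$ of residues appearing in $\{G_n(a,b)\}$ mod $m$ is exactly $\{k r \bmod m : r \in S_F\}$, where $S_F$ is the set of residues appearing in the Fibonacci sequence mod $m$. I then split on $d = \gcd(k,m)$. If $d > 1$, every element of $S_G$ is a multiple of $d$, so in particular $1 \notin S_G$ and $\{G_n(a,b)\}$ is defective mod $m$. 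If $d = 1$, multiplication by $k$ is a bijection on $\mathbb{Z}/m\mathbb{Z}$, hence $|S_G| = |S_F|$; since the Fibonacci sequence is assumed defective mod $m$, $|S_F| < m$, so $|S_G| < m$ and $\{G_n(a,b)\}$ is defective mod $m$.

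I do not expect any serious obstacle here; the crux is just recognizing that Proposition~\ref{ZeroEquivMultiple} reduces every Gibonacci cycle containing a $0$ to a scalar multiple of the Fibonacci cycle, after which the $\gcd(k,m) = 1$ sub-case uses the bijectivity of multiplication by a unit and the $\gcd(k,m) > 1$ sub-case is handled by observing that the image lies in a proper subgroup of $\mathbb{Z}/m\mathbb{Z}$. The mildest subtlety is remembering to dispose of the zero-free case separately before invoking Proposition~\ref{ZeroEquivMultiple}, since the latter requires the presence of a $0$ in the cycle.
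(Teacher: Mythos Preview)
Your proof is correct and follows essentially the same approach as the paper: the same case split on whether $0$ appears, with Proposition~\ref{ZeroEquivMultiple} handling the nontrivial case. The paper's Case~2 is slightly more economical---it simply observes that $|S_G| \le |S_F|$ because $S_G$ is the image of $S_F$ under the map $r \mapsto kr \bmod m$, which avoids your sub-split on $\gcd(k,m)$; but your more explicit version is equally valid.
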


\begin{proof}
We will assume the Fibonacci sequence is defective mod $m$ and proceed by cases:

Case 1: If 0 does not appear in \{$G_n(a,b)$\} mod $m$, then \{$G_n(a,b)$\} is defective mod $m$.

Case 2: If 0 appears in \{$G_n(a,b)$\} mod $m$, then \{$G_n(a,b)$\} mod $m$ is a multiple of the Fibonacci sequence mod $m$ and there is a $k$ such that $G_n \equiv k \cdot F_{n+r}$ (mod $m$) for all $n$ and some $r$. Thus the number of residues that appear in \{$G_n(a,b)$\} mod $m$ must be less than or equal to the number of residues that appear in the Fibonacci sequence mod $m$. Because the Fibonacci sequence is defective mod $m$, $\{G_n(a,b)\}$ must also be defective mod $m$. $\blacksquare$
\end{proof}

We will also make use of the following proposition, a result of Shah~\cite{Shah68}. We present his proof here.

\begin{proposition}
\label{mTotm}
If \{$G_n(a,b)$\} is defective mod $m$, then it will also be defective mod $tm$, where $t \in \mathbb{N}$.
\end{proposition}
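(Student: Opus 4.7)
The plan is to use the basic fact that reduction modulo $m$ factors through reduction modulo $tm$: for any integer $N$, the residue $N$ (mod $m$) is obtained by first reducing $N$ modulo $tm$ and then reducing that representative modulo $m$. Applying this termwise to the sequence $\{G_n(a,b)\}$, the sequence of residues modulo $m$ is exactly what we get from the sequence of residues modulo $tm$ after further reduction mod $m$.

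First, I would fix some residue $r$, with $0 \leq r < m$, that does not appear in $\{G_n(a,b)\}$ modulo $m$; such an $r$ exists by the hypothesis of defectiveness. I would then argue by contradiction: suppose $r$ (viewed now as a residue modulo $tm$, using the same integer $r$ with $0 \leq r < m \leq tm$) does appear in $\{G_n(a,b)\}$ modulo $tm$, so that $G_n \equiv r$ (mod $tm$) for some index $n$. Since $m \mid tm$, this congruence implies $G_n \equiv r$ (mod $m$), contradicting the choice of $r$. Hence $r$ does not appear in $\{G_n(a,b)\}$ modulo $tm$, and $\{G_n(a,b)\}$ is defective mod $tm$.

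There is essentially no serious obstacle here; the only subtlety worth stating cleanly in the write-up is the transitivity of modular reduction, namely that $N \equiv r \pmod{tm}$ forces $N \equiv r \pmod{m}$ whenever $m \mid tm$. Everything else is bookkeeping: one picks a missing residue mod $m$, lifts it to the canonical representative in $\{0, 1, \ldots, m-1\} \subseteq \{0, 1, \ldots, tm-1\}$, and observes that if it appeared mod $tm$, reduction would produce an appearance mod $m$. So the proof will be very short, essentially a one-paragraph contradiction argument.
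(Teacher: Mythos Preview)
Your argument is correct and is essentially the same as the paper's: the paper phrases it as the contrapositive (complete mod $tm$ implies complete mod $m$, since $G_n \equiv r \pmod{tm}$ forces $G_n \equiv r \pmod{m}$), while you phrase the same implication as a contradiction on a single missing residue. The key step---that $m \mid tm$ makes congruence mod $tm$ imply congruence mod $m$---is identical in both.
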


\begin{proof}
The proof of the contrapositive is as follows: Suppose \{$G_n(a,b)$\} is complete mod $tm$. Then for all $0 \leq r \leq m-1$, some $G_n \equiv r$ (mod $tm$). But then $G_n \equiv r$ (mod $m$), meaning \{$G_n(a,b)$\} is also complete mod $m$. $\blacksquare$
\end{proof}

\section{The Lucas Numbers Mod $m$}

In this section we characterize which $m$ have the property that the Lucas numbers are complete mod $m$.

\begin{theorem}
\label{CompleteMods}
The Lucas numbers are complete mod $m$ if and only if $m$ is one of the following numbers: 2, 4, 6, 7, 14, or 3$^j$, where $j \geq$ 1.
\end{theorem}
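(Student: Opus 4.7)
The plan is to prove both directions by combining Burr's classification of the moduli for which the Fibonacci sequence is complete with the propositions already established in Sections~1 and~2.

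\emph{Necessity.} Suppose the Lucas numbers are complete mod $m$. The contrapositive of Proposition~\ref{FibToGib} immediately forces the Fibonacci sequence to be complete mod $m$, so by Burr's theorem $m$ lies in the set $\{5^k,\, 2\cdot 5^k,\, 4\cdot 5^k,\, 3^j\cdot 5^k,\, 6\cdot 5^k,\, 7\cdot 5^k,\, 14\cdot 5^k\}$. A one-line computation shows that the Lucas cycle mod $5$ is $(1,3,4,2)$, which does not contain $0$, so Lucas is defective mod $5$. Proposition~\ref{mTotm} then makes Lucas defective mod every multiple of $5$, eliminating every entry of Burr's list with $k\geq 1$ and leaving precisely $\{2,\,4,\,6,\,7,\,14,\,3^j : j\geq 1\}$.

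\emph{Sufficiency.} For each small modulus $m \in \{2,4,6,7,14\}$ the Lucas cycle is short enough that I would write it out by hand and observe that every residue appears; for $m = 14$ one can alternatively invoke CRT from the $m=2$ and $m=7$ cases since $\gcd(2,7) = 1$. The main content lies in the infinite family $m = 3^j$, which I would handle inductively. The base case $j=1$ is clear because the Lucas cycle mod $3$ is $(1,0,1,1,2,0,2,2)$. For the inductive step, Proposition~\ref{ZeroEquivMultiple} reduces the problem to showing two things: (a) a $0$ appears in the Lucas cycle mod $3^j$, and (b) the multiplier $k$ for which Lucas equals $k\cdot F_{n+r}$ mod $3^j$ satisfies $\gcd(k,3)=1$. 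Given (a) and (b), completeness of Fibonacci mod $3^j$ (Burr) transfers to Lucas because multiplication by a unit is a bijection on $\mathbb{Z}/3^j\mathbb{Z}$.

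For part (a) the natural candidate index is $n = 2\cdot 3^{j-1}$, matching the observed pattern $L_2=3,\ L_6=18,\ L_{18}=5778$. The key identity is $L_{3n} = L_n^{3} - 3(-1)^n L_n$, which implies $v_3(L_{3n}) = v_3(L_n) + 1$ whenever $v_3(L_n)\geq 1$; starting from $v_3(L_2)=1$, induction yields $v_3(L_{2\cdot 3^{j-1}}) = j$ exactly, so $L_{2\cdot 3^{j-1}}\equiv 0 \pmod{3^j}$. For part (b) the multiplier $k$ is the value of $L_{2\cdot 3^{j-1}+1}$ mod $3^j$, so it suffices to check its residue mod $3$. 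Because Lucas mod $3$ has period $8$ and $2\cdot 3^{j-1} \bmod 8$ is always $2$ or $6$, reading off the cycle $(1,0,1,1,2,0,2,2)$ shows $k\equiv 1$ or $k\equiv 2 \pmod 3$, so $\gcd(k,3)=1$ as required.

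The main obstacle is establishing part (a) cleanly — the $3$-adic lifting statement $v_3(L_{2\cdot 3^{j-1}}) = j$ — because without it the inductive step has no anchor. Once the triplication identity for $L_{3n}$ is exploited to produce this lifting, every other ingredient is either a finite cycle computation or an immediate application of Propositions~\ref{FibToGib}, \ref{mTotm}, and~\ref{ZeroEquivMultiple}.
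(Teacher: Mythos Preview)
Your necessity argument and your plan to handle the finite list $\{2,4,6,7,14\}$ by writing out the Lucas cycles coincide with the paper's proof. One caution: the ``alternatively invoke CRT'' remark for $m=14$ is not valid as stated --- completeness mod $2$ and mod $7$ separately does \emph{not} imply completeness mod $14$, since hitting a residue $a\pmod 2$ at some index and a residue $b\pmod 7$ at some other index says nothing about hitting the pair $(a,b)$ at a common index. You should rely on the direct length-$48$ cycle computation, as the paper does (and then, as the paper also does, deduce $m=2$ and $m=7$ from $m=14$ via Proposition~\ref{mTotm}, which is the correct direction).

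For the infinite family $m=3^j$ your route genuinely differs from the paper's. The paper proves the stronger Lemma~\ref{ThreeNComp} that \emph{every} Gibonacci sequence is complete mod $3^j$, by appealing to Burr's inductive description of the complete Gibonacci system mod $3^j$: since $\gcd(a,b)=1$, the cycle of $\{G_n(a,b)\}$ cannot sit in the part obtained by multiplying the mod-$3^{j-1}$ system by $3$, so it must be one of the cycles $C_k = k\cdot(\text{Fibonacci cycle})$ with $\gcd(k,3)=1$, and multiplication by a unit transfers Burr's Fibonacci completeness. Your argument is Lucas-specific: you locate a zero explicitly at index $2\cdot 3^{j-1}$ via the tripling identity $L_{3n}=L_n^{3}-3(-1)^{n}L_n$ and a $3$-adic valuation count, then verify via the period-$8$ cycle mod $3$ that the succeeding term is a unit. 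Both arguments ultimately reduce to ``Lucas is a unit multiple of Fibonacci mod $3^j$,'' but the paper's version is structural and immediately reusable (it feeds into Proposition~\ref{LucasMin} in the next section), whereas yours is self-contained and avoids citing Burr's system construction at the cost of an extra identity and the mod-$8$ casework.
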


Theorem~\ref{CompleteMods} will be proved through a series of lemmas. In comparison to the Fibonacci sequence, which is complete mod $5^k, 2 \cdot 5^k, 4 \cdot 5^k, 3^j \cdot 5^k, 6 \cdot 5^k, 7 \cdot 5^k,$ and $14 \cdot 5^k$ for $k \geq$ 0 and $j \geq$ 1, this is a relatively small number of moduli.

If we remove the 5$^k$s from the list of moduli $m$ in which the Fibonacci sequence is complete mod $m$, we derive the numbers in theorem~\ref{CompleteMods}. This observation leads us to believe that 5 will be important to our discussion, so we will consider the Lucas numbers mod 5.

\begin{lemma}
\label{FiveDefective}
The Lucas numbers are defective mod 5.
\end{lemma}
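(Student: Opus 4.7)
The plan is to prove the statement by direct computation of the Lucas cycle mod $5$ and to observe that $0$ is absent from it. Since Proposition~\ref{GibCyclical} guarantees that $\{L_n\}$ mod $5$ is periodic, I only need to compute Lucas numbers mod $5$ until the ordered pair $(L_1, L_2) = (1, 3)$ reappears; at that point the entire cycle has been exhibited and any residue not seen so far is permanently missing.

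First I would list $L_1, L_2, L_3, \ldots$ reduced mod $5$ by repeatedly applying the recurrence. Starting from $L_1 \equiv 1$ and $L_2 \equiv 3$, the next entries are $L_3 \equiv 4$, $L_4 \equiv 2$, $L_5 \equiv 1$, $L_6 \equiv 3 \pmod 5$. Since $(L_5, L_6) \equiv (L_1, L_2) \pmod 5$, the Lucas cycle mod $5$ is $(1, 3, 4, 2)$, which has length $4$ and consists of the four nonzero residues mod $5$. In particular, $0$ never appears, so by definition the Lucas numbers are defective mod $5$.

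The argument has no real obstacle: the only thing to be careful about is confirming that once a consecutive pair repeats, no new residues can appear later (this is exactly the content of Proposition~\ref{GibCyclical}, so it can be invoked rather than re-argued). As a consistency check, one can also note via Proposition~\ref{ZeroEquivMultiple} that if $0$ did appear in the Lucas cycle mod $5$, then $\{L_n\}$ mod $5$ would be a multiple of the Fibonacci sequence mod $5$ and would therefore inherit its cycle length of $20$; but the computed Lucas cycle has length $4$, giving a second, independent verification.
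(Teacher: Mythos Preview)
Your proof is correct and follows essentially the same approach as the paper: direct computation of the Lucas cycle mod $5$, which is $(1,3,4,2)$, followed by the observation that the residue $0$ never appears. Your additional remarks (explicitly invoking Proposition~\ref{GibCyclical} to justify stopping once the initial pair recurs, and the consistency check via Proposition~\ref{ZeroEquivMultiple}) are sound but go beyond what the paper includes.
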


\begin{proof}
This is seen by direct analysis of the Lucas numbers mod 5. The Lucas cycle mod 5 is (1, 3, 4, 2). The residue 0 never appears, meaning the Lucas numbers are defective mod 5. $\blacksquare$
\end{proof}

	Furthermore, by proposition~\ref{mTotm}, the defectiveness of the Lucas numbers mod 5 implies that the Lucas numbers will also be defective mod $5t$, for all multiples of 5. Thus we can remove any factors of 5 from the list of mods $m$ in which the Fibonacci sequence is complete and limit our searching for mods $m$ in which the Lucas numbers are complete to whatever remains. These remaining numbers are exactly the numbers listed in theorem~\ref{CompleteMods}. We will show by direct observation that the Lucas numbers are complete mod 2, 4, 6, 7, and 14.

\begin{lemma}
\label{FourComp}
The Lucas numbers are complete mod 4, 6, and 14.
\end{lemma}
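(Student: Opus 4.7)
The plan is to prove this lemma by direct computation, exactly as the previous lemma (Lemma~\ref{FiveDefective}) handled the mod 5 case. For each of $m = 4, 6, 14$, I would write out the Lucas cycle mod $m$ starting from $L_1 = 1, L_2 = 3$, applying the Fibonacci recurrence and reducing mod $m$ at each step, until a pair $(L_n, L_{n+1})$ mod $m$ returns to $(1, 3)$ mod $m$. By Proposition~\ref{GibCyclical}, this pair must recur, and the sequence of residues between these two occurrences is the full Lucas cycle mod $m$.

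After exhibiting each cycle, I would check that every residue $0, 1, \ldots, m-1$ appears somewhere in the cycle. For $m = 4$, the cycle turns out to be $(1, 3, 0, 3, 3, 2)$, which contains all four residues $\{0,1,2,3\}$. For $m = 6$, I expect a somewhat longer cycle (length 24, since the Pisano period for the Fibonacci sequence mod 6 is 24 and the Lucas cycle length matches), and I would verify that all six residues appear. For $m = 14$, the cycle is longer still, and the bookkeeping is the main issue.

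As a possible sanity check, I could invoke Proposition~\ref{MSquared}: the total number of terms in a complete Gibonacci system mod $m$ is $m^2$, so the Lucas cycle length cannot exceed $m^2$ (minus the 1 contributed by the trivial cycle $(0)$), giving a concrete upper bound on how long the computation must run before termination is guaranteed.

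The main obstacle is purely computational tedium rather than any conceptual difficulty, particularly in the mod 14 case where the cycle is relatively long. There is no subtle argument needed — just careful arithmetic and a final enumeration of residues appearing in each cycle. Once every residue in $\{0, 1, \ldots, m-1\}$ is witnessed for each of $m = 4, 6, 14$, the lemma follows immediately from the definition of completeness.
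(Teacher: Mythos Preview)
Your proposal is correct and matches the paper's approach exactly: the paper proves the lemma by writing out the Lucas cycle mod $4$, mod $6$, and mod $14$ explicitly and observing that every residue appears. One incidental note: your expectation that the Lucas cycle mod $6$ has length $24$ is correct (the Lucas and Fibonacci sequences lie in the same Gibonacci cycle mod $6$); the paper's displayed list for mod $6$ stops after $15$ terms, which already suffices to witness all six residues even though it is not the full cycle.
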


\begin{proof}
The Lucas cycle mod 4 is (1, 3, 0, 3, 3, 2). Thus all residues mod 4 appear in the Lucas numbers. The Lucas cycle mod 6 is (1, 3, 4, 1, 5, 0, 5, 5, 4, 3, 1, 4, 5, 3, 2). Thus all residues mod 6 appear in the Lucas numbers. The Lucas cycle mod 14 is

\[ \begin{array}{cccccccc}
\mbox{(1,} & 3, & 4, & 7, & 11, & 4, & 1, & 5, \\
\mbox{6,} & 11, & 3, & 0, & 3, & 3, & 6, & 9, \\
\mbox{1,} & 10, & 11, & 7, & 4, & 11, & 1, & 12, \\
\mbox{13,} & 11, & 10, & 7, & 3, & 10, & 13, & 9, \\
\mbox{8,} & 3, & 11, & 0, & 11, & 11, & 8, & 5, \\
\mbox{13,} & 4, & 3, & 7, & 10, & 3, & 13, & 2). \end{array}\]

Thus all residues mod 14 appear in the Lucas numbers, and the lemma is shown. $\blacksquare$
\end{proof}

Because the Lucas numbers are complete mod 14, we also know by proposition~\ref{mTotm} that the Lucas numbers are complete mod 2 and 7, as well.

Hence it only remains to be shown that the Lucas numbers are complete mod $3^j$ for all $j \geq 1$, and theorem~\ref{CompleteMods} will be proven. We will prove a much stronger lemma which claims that any Gibonacci sequence \{$G_n(a,b)$\} is complete mod $3^j$ for all $j \geq 1$.

Burr uses induction to construct complete Gibonacci systems mod $3^j$. The $j = 1$ case gives the system $\{(0, 1, 1, 2, 0, 2, 2, 1), (0)\}$. Then assuming a complete Gibonacci system has been determined mod $3^{j-1}$ for $j \geq 2$, he constructs a complete Gibonacci system mod $3^j$ as follows: He determines the Fibonacci cycle mod $3^j$ and then creates the complete Gibonacci system mod $3^j$ in two parts. The first part is $k$ times the Fibonacci cycle mod $3^j$, where $k < 3^j$ and $\gcd(3,k) = 1$. There are $3^{j-1}$ of these, and each is denoted as $C_k$. The second part is formed by multiplying each term in the inductively hypothesized complete Gibonacci system mod $3^{j-1}$ by 3. These two parts are disjoint, containing a total of $3^{2j}$ terms and hence together form a complete Gibonacci system mod $3^j$. We will use this information to prove the following lemma:

\begin{lemma}
\label{ThreeNComp}
All Gibonacci sequences \{$G_n(a,b)$\} are complete mod $3^j$ for all $j \geq 1$.
\end{lemma}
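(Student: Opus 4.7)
The plan is to invoke Burr's inductive construction of the complete Gibonacci system mod $3^j$, described immediately before the lemma, and argue that the cycle of $\{G_n(a,b)\}$ mod $3^j$ must lie among the $C_k$ components rather than among the cycles obtained by multiplication by 3. Recall that Burr partitions the complete Gibonacci system mod $3^j$ into two disjoint families: the cycles $C_k$, each of which is the Fibonacci cycle mod $3^j$ multiplied by some $k < 3^j$ with $\gcd(k,3)=1$; and the cycles obtained by multiplying every term of a complete Gibonacci system mod $3^{j-1}$ by 3. Every cycle in the second family consists entirely of terms divisible by 3.

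Next, I would use the hypothesis $\gcd(a,b)=1$ to rule out the second family. Since $\gcd(a,b)=1$, at least one of $G_1 = a$ and $G_2 = b$ is coprime to 3, so the Gibonacci cycle associated with $\{G_n(a,b)\}$ mod $3^j$ contains a term not divisible by 3. It therefore cannot belong to the second family and must be equivalent to some $C_k$; that is, there exist $k$ with $\gcd(k,3)=1$ and an offset $r$ such that $G_n \equiv k \cdot F_{n+r} \pmod{3^j}$ for all $n$.

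Finally, since $\gcd(k,3)=1$ implies $\gcd(k,3^j)=1$, multiplication by $k$ is a bijection on $\mathbb{Z}/3^j\mathbb{Z}$. The Fibonacci sequence is complete mod $3^j$ by Burr's characterization recalled in the introduction (taking $k=0$, $j \geq 1$ in the list of moduli), so $\{k \cdot F_n \pmod{3^j}\}$ hits every residue mod $3^j$, and hence so does $\{G_n(a,b)\}$ mod $3^j$. The main obstacle is only the dichotomy step above, and it is handled directly by the structural decomposition of Burr's construction; once we know our cycle is some $C_k$, completeness transfers from the Fibonacci sequence to $\{G_n(a,b)\}$ by the bijection argument, which is immediate.
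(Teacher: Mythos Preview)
Your proposal is correct and follows essentially the same approach as the paper: use Burr's decomposition of the complete Gibonacci system mod $3^j$, invoke $\gcd(a,b)=1$ to rule out the ``multiplied-by-3'' cycles, conclude that the Gibonacci cycle is some $C_k$ with $\gcd(k,3^j)=1$, and then transfer completeness from the Fibonacci sequence via the bijection given by multiplication by $k$. The paper's proof is slightly terser but uses exactly this dichotomy and bijection argument.
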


\begin{proof}
Burr showed that the Fibonacci sequence is complete mod $3^j$. We are not concerned here with the intricacies of Burr’s argument, but we wish to determine where \{$G_n(a,b)$\} mod $3^j$ appears in the complete Gibonacci system mod $3^j$. This cycle must appear somewhere within the system, since it is a complete system. \{$G_n(a,b)$\} contains the relatively prime $a$ and $b$ as consecutive terms by definition. Hence, $a$ (mod $3^j$) and $b$ (mod $3^j$) are not both multiples of 3, for otherwise $a$ and $b$ would not be relatively prime, and so $\{G_n(a,b)\}$ mod $3^j$ is not entirely made up of multiples of 3. Hence, it cannot be a cycle that was constructed by multiplying the complete Gibonacci system mod $3^{j-1}$ by 3, as all of those terms are multiples of 3. This means that \{$G_n(a,b)$\} mod $3^j$ must appear in some $C_k$, where $k$ is relatively prime to 3 and therefore relatively prime to $3^j$. Hence, the Gibonacci cycle $(k \cdot F_1, k \cdot F_2, …, k \cdot F_{h-1}, k \cdot F_h)$ (mod $3^j$) contains the same number of least positive residues as the Fibonacci cycle mod $3^j$, because we are guaranteed unique inverses by $\gcd(k, 3^j) = 1$. Since the Fibonacci sequence is complete mod $3^j$, any Gibonacci sequence \{$G_n(a,b)$\} must also be complete mod $3^j$. $\blacksquare$
\end{proof}

\begin{proof}[Proof of Theorem~\ref{CompleteMods}]
Having shown lemma~\ref{ThreeNComp}, we have completed our proof of theorem~\ref{CompleteMods}. The Lucas numbers are complete mod 2, 4, 6, 7, 14, and $3^j$, where $j \geq 1$ and they are defective mod $m$ for all $m$ not of this form. $\blacksquare$
\end{proof}

\section{Extending completeness in the Lucas Numbers}

Lemma~\ref{ThreeNComp} makes us wonder if we can extend the completeness of the Lucas numbers mod $m$ to all Gibonacci sequences no matter what $m$ is. If this is the case, then the set of $m$ in which the Lucas numbers are complete mod $m$ will be a subset of the set of $m$ in which any generic Gibonacci sequence is complete mod $m$. We offer this as a proposition below:

\begin{proposition}
\label{LucasMin}
If the Lucas numbers are complete mod $m$, then all Gibonacci sequences are complete mod $m$.
\end{proposition}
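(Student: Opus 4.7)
The plan is a case analysis driven by Theorem~\ref{CompleteMods}, which restricts the moduli $m$ in question to $\{2, 4, 6, 7, 14\} \cup \{3^j : j \geq 1\}$. The case $m = 3^j$ is already Lemma~\ref{ThreeNComp}, and since a Gibonacci sequence complete mod $14$ is automatically complete mod $2$ and mod $7$ by the contrapositive of Proposition~\ref{mTotm}, the only moduli that really need fresh work are $m \in \{4, 6, 14\}$.

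For each of these three moduli I would lean on Proposition~\ref{ZeroEquivMultiple} to reduce to Burr's statement that the Fibonacci sequence itself is complete mod $m$. Concretely, I would establish two subclaims: (i) every Gibonacci sequence $\{G_n(a,b)\}$ with $\gcd(a,b)=1$ contains $0$ mod $m$, and (ii) the residue $k$ directly following that $0$ is a unit mod $m$. Subclaim (ii) is immediate: if a prime $p \mid m$ divided $k$, then two consecutive terms of the sequence would lie in $p\mathbb{Z}$, and applying the Fibonacci recurrence both forward and backward would force $p \mid G_n$ for every $n$, in particular $p \mid a$ and $p \mid b$, contradicting $\gcd(a,b) = 1$. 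Given (i) and (ii), Proposition~\ref{ZeroEquivMultiple} writes $\{G_n(a,b)\}$ mod $m$ as a unit scalar multiple of the Fibonacci cycle, and since multiplication by a unit is a bijection on $\mathbb{Z}/m\mathbb{Z}$, the set of residues appearing in $\{G_n(a,b)\}$ equals the (complete) set of residues appearing in the Fibonacci sequence mod $m$.

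The main obstacle is subclaim (i). For $m = 6$ and $m = 14$ I would invoke the Chinese Remainder Theorem: the hypothesis $\gcd(a,b) = 1$ rules out the trivial cycle modulo every prime divisor of $m$, so the sequence mod $2$ has period exactly $3$ (cycle $(1, 1, 0)$), mod $3$ has period exactly $8$ (the sole non-trivial cycle $(1, 1, 2, 0, 2, 2, 1, 0)$), and mod $7$ has period exactly $16$, lying in one of three non-trivial length-$16$ cycles, each of which can be checked by hand to contain $0$. Pairwise coprimality $\gcd(3, 8) = \gcd(3, 16) = 1$ then forces the mod-$6$ and mod-$14$ cycles to have full joint periods $24$ and $48$ respectively, realizing every combination of component residues and in particular the residue $0$. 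The case $m = 4$ is genuinely $2$-adic and requires direct enumeration in the style of Lemma~\ref{ThreeNComp}: the complete Gibonacci system mod $4$ consists of the Fibonacci cycle $(1, 1, 2, 3, 1, 0)$, the Lucas cycle $(1, 3, 0, 3, 3, 2)$, the all-even cycle $(2, 2, 0)$, and the trivial cycle $(0)$, with term counts summing to $16 = 4^2$ by Proposition~\ref{MSquared}; the last two cycles are disqualified by the $\gcd(a,b) = 1$ hypothesis (every consecutive pair is even), leaving only the Fibonacci and Lucas cycles, both of which plainly contain $0$.
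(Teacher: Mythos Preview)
Your argument is correct, and it takes a genuinely different route from the paper's proof. The paper handles $m\in\{4,6,14\}$ by brute force: it writes out the entire complete Gibonacci system modulo each of these three numbers (Lemmas~\ref{FourGibComp}, \ref{SixGibComp}, \ref{FourteenGibComp}) and then simply observes that every cycle containing a coprime consecutive pair is complete. You instead prove the structural fact that every Gibonacci sequence is a \emph{unit} multiple of the Fibonacci cycle modulo $m$, and then inherit completeness from Burr's result for Fibonacci. Your subclaim~(ii) is exactly the argument the paper deploys only in the $3^j$ case (Lemma~\ref{ThreeNComp}), so you are recognizing that this idea works uniformly. For $m=6$ and $m=14$ your CRT argument on the index set is cleaner than the paper's enumeration, since it replaces the explicit $24$-term and $196$-term systems with a short divisibility check on periods; the only hand verification you actually need is that the three nontrivial cycles mod~$7$ each contain a zero, which is much less than the paper writes out. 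For $m=4$ the two approaches converge, since you also enumerate the full system there. The trade-off is that the paper's approach is entirely self-contained and visual, while yours imports Burr's completeness of $\{F_n\}$ modulo $4,6,14$ as an external input; both are valid, and yours scales better if one ever wanted to treat further moduli.
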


We will construct several complete Gibonacci systems to prove proposition~\ref{LucasMin}. We need only check the complete Gibonacci systems mod 4, 6, and 14 to succeed in our proof, as 2 and 7 will follow from 14.

\begin{lemma}
\label{FourGibComp}
All Gibonacci sequences are complete mod 4.
\end{lemma}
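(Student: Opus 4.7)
The plan is to enumerate every Gibonacci cycle mod 4 and then check each one that actually arises from a Gibonacci sequence for the presence of all four residues. To make sure the enumeration is exhaustive, I would invoke Proposition~\ref{MSquared}, which tells us the complete Gibonacci system mod 4 must have exactly $4^2 = 16$ terms total.

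First I would list cycles by starting from a pair $[a,b]$ not yet accounted for and running the Fibonacci relation until it closes. Starting from $[1,1]$ produces the Fibonacci cycle mod 4, namely $(1,1,2,3,1,0)$, of length 6. Starting from $[1,3]$ produces $(1,3,0,3,3,2)$, of length 6 (this is the Lucas cycle from Lemma~\ref{FourComp}); note that although it equals $3 \cdot (1,1,2,3,1,0)$ termwise up to a cyclic shift, it is inequivalent to the Fibonacci cycle under our definition because the two cycles share no consecutive pair. The trivial cycle $(0)$ contributes $1$ term, giving $6+6+1 = 13$ terms so far. The remaining $3$ terms must form a single short cycle, and indeed starting from $[2,0]$ yields $(2,0,2)$ of length 3. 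This accounts for all $16$ terms, so the complete Gibonacci system mod 4 is
\[ \{(1,1,2,3,1,0),\ (1,3,0,3,3,2),\ (2,0,2),\ (0)\}. \]

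Next I would rule out the two cycles that cannot arise from a Gibonacci sequence in the sense of our definition: every entry of $(2,0,2)$ is even, so any pair of consecutive terms fails $\gcd(a,b)=1$; similarly, the trivial cycle $(0)$ forces $a = b = 0$. The only cycles that can be realized by a genuine Gibonacci sequence are therefore $(1,1,2,3,1,0)$ and $(1,3,0,3,3,2)$, and both plainly contain each of the residues $0, 1, 2, 3$. Hence every Gibonacci sequence mod 4 is complete.

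The only real obstacle is confirming that the enumeration is complete, and this is dispatched cleanly by the $m^2$ count from Proposition~\ref{MSquared}; once the term counts of the four cycles listed above add up to $16$, no further cycles exist, and the case analysis on cycles arising from valid Gibonacci sequences finishes the argument.
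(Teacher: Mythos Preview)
Your proof is correct and follows essentially the same approach as the paper: list the complete Gibonacci system mod $4$, observe that every cycle containing a pair of coprime consecutive terms hits all four residues, and conclude. The only differences are cosmetic—you start two of the cycles at different points (the paper writes $(3,3,2,1,3,0)$ and $(2,2,0)$ where you write $(1,3,0,3,3,2)$ and $(2,0,2)$), and you are more explicit than the paper in invoking Proposition~\ref{MSquared} to certify that the four cycles exhaust the system.
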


\begin{proof}
The complete Gibonacci system mod 4 is

\{(1, 1, 2, 3, 1, 0),

(3, 3, 2, 1, 3, 0),

(2, 2, 0),

(0)\}.

Here each cycle with some $i_j, i_{j+1}$ relatively prime is complete. All Gibonacci sequences must correspond to one of these cycles, so all Gibonacci sequences are complete mod 4. $\blacksquare$
\end{proof}

\begin{lemma}
\label{SixGibComp}
All Gibonacci sequences are complete mod 6.
\end{lemma}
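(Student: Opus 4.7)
The plan is to follow the same recipe as Lemma~\ref{FourGibComp}: exhibit an explicit list of Gibonacci cycles mod $6$, confirm via Proposition~\ref{MSquared} that they form the complete Gibonacci system, and then check that every cycle which can actually arise from a Gibonacci sequence contains all six residues.

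First I will build candidate cycles by iterating the Fibonacci recurrence mod $6$ from carefully chosen starting pairs. The pair $(1,1)$ produces a long cycle of length $24$, which up to rotation coincides with the Lucas cycle mod $6$ exhibited in Lemma~\ref{FourComp}. The starting pairs $(2,2)$, $(3,3)$, and $(0,0)$ generate the shorter cycles $(2,2,4,0,4,4,2,0)$, $(3,3,0)$, and the trivial $(0)$, of lengths $8$, $3$, and $1$ respectively. Since $24+8+3+1 = 36 = 6^2$, Proposition~\ref{MSquared} forces these four inequivalent cycles to form the complete Gibonacci system mod $6$, so no other cycle exists to worry about.

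Next I will argue that three of these cycles cannot be derived from a genuine Gibonacci sequence: in $(2,2,4,0,4,4,2,0)$ every entry is even, in $(3,3,0)$ every entry is divisible by $3$, and in $(0)$ every entry is divisible by $6$. Thus any integer sequence reducing mod $6$ to one of these cycles has a pair of consecutive terms sharing a common prime factor, contradicting the requirement $\gcd(a,b)=1$ in the definition of a Gibonacci sequence. It follows that every Gibonacci sequence $\{G_n(a,b)\}$ reduces mod $6$ to some rotation of the $24$-cycle, and a direct scan of that cycle shows each of $0,1,2,3,4,5$ appears. Hence every Gibonacci sequence is complete mod $6$.

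The only mildly delicate step is being sure no cycle has been missed. The $m^2$ term-count of Proposition~\ref{MSquared} is the clean way to dispose of that worry: once the four cycles above are shown to account for all $36$ terms, nothing else can occur, and the remaining verification is purely by inspection.
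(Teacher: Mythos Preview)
Your argument is correct and follows essentially the same approach as the paper: exhibit the four cycles mod $6$, use the $m^2$ term count from Proposition~\ref{MSquared} to certify that they constitute the complete Gibonacci system, and observe that the only cycle admitting a coprime consecutive pair is the length-$24$ cycle, which contains every residue. The paper's proof is terser (it simply lists the system and notes that the unique cycle with a coprime pair is complete), but the substance is identical; your explicit invocation of Proposition~\ref{MSquared} and your spelled-out explanation of why the shorter cycles are excluded are welcome additions rather than departures.
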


\begin{proof}
The complete Gibonacci system mod 6 is 

\{(1, 1, 2, 3, 5, 2, 1, 3, 4, 1, 5, 0, 5, 5, 4, 3, 1, 4, 5, 3, 2, 5, 1, 0),

(2, 2, 4, 0, 4, 4, 2, 0),

(3, 3, 0),

(0)\}.

Here there is only one cycle with some $i_j, i_{j+1}$ relatively prime, and it is complete, so all Gibonacci sequences are complete mod 6. $\blacksquare$
\end{proof}

\begin{lemma}
\label{FourteenGibComp}
All Gibonacci sequences are complete mod 14.
\end{lemma}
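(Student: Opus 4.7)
The plan mirrors the proofs of Lemmas~\ref{FourGibComp} and~\ref{SixGibComp}: exhibit the complete Gibonacci system mod $14$, confirm via Proposition~\ref{MSquared} that the total term count is $14^2 = 196$, identify which of the constituent cycles can arise from an actual Gibonacci sequence, and then verify that each such cycle contains all $14$ residues. Because the length-$48$ cycles mod $14$ are cumbersome to list term by term, I would organize the bookkeeping via the Chinese Remainder Theorem rather than writing everything out by hand.

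Since $14 = 2 \cdot 7$, every Gibonacci cycle mod $14$ reduces, via CRT, to a pair of cycles, one mod $2$ and one mod $7$. Mod $2$ there are only two cycles: $(0)$ and $(1,1,0)$. Mod $7$, a short direct calculation gives four: the trivial cycle $(0)$, the Fibonacci cycle of length $16$, the Lucas cycle of length $16$, and one further cycle of length $16$ beginning $(1,4,5,2,0,\ldots)$. Each of the three nontrivial mod-$7$ cycles contains every residue mod $7$, and the term total $1 + 3\cdot 16 = 49 = 7^2$ confirms that this is the complete Gibonacci system mod $7$. Pairing these components via CRT produces a cycle mod $14$ whose length is the least common multiple of the two component lengths, which are coprime in every combination encountered.

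This yields eight cycles mod $14$: the trivial cycle $(0)$ of length $1$; three ``all-even'' cycles of length $16$ each (pairing $(0)$ mod $2$ with a nontrivial mod-$7$ cycle); the single cycle $(7,7,0)$ of length $3$ (pairing $(1,1,0)$ mod $2$ with $(0)$ mod $7$); and three cycles of length $48$ (pairing $(1,1,0)$ with each nontrivial mod-$7$ cycle). The term count is $1 + 3\cdot 16 + 3 + 3\cdot 48 = 196 = 14^2$, so by Proposition~\ref{MSquared} this really is the complete Gibonacci system mod $14$.

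Five of these cycles have all terms divisible by $2$ or all terms divisible by $7$ (the trivial cycle, the three all-even cycles, and $(7,7,0)$); none can arise from a Gibonacci sequence, since a common divisor $d > 1$ of every cycle term would force $d \mid a$ and $d \mid b$, contradicting $\gcd(a,b) = 1$. That leaves the three cycles of length $48$. For each of these, because $\gcd(3,16) = 1$ its $48$ steps realize every combination of position in the mod-$2$ cycle with position in the mod-$7$ cycle; since both component cycles hit every residue in their respective moduli, CRT guarantees that every residue mod $14$ is attained. The main obstacle is the mod-$7$ enumeration---verifying that exactly three nontrivial cycles exist and each is complete; once that is in hand, the CRT count and Proposition~\ref{MSquared} handle the rest cleanly.
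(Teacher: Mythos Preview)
Your proof is correct but takes a genuinely different route from the paper's. The paper proceeds by brute force: it writes out the Fibonacci cycle mod $14$ explicitly (all $48$ terms), asserts that $3\cdot F_n$ and $5\cdot F_n$ (mod $14$) give two further length-$48$ cycles ``of the same length and completeness,'' and then lists the remaining small cycles (three all-even cycles of length $16$, the cycle $(7,7,0)$, and $(0)$) to reach the count $196 = 14^2$. Completeness of the three length-$48$ cycles is claimed essentially by inspection together with the observation that multiplying by a unit mod $14$ permutes residues.

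Your CRT decomposition is more conceptual and avoids writing out any $48$-term cycle. By working mod $2$ and mod $7$ separately, verifying that each nontrivial cycle mod $7$ is complete, and then invoking $\gcd(3,16)=1$ to guarantee that the lifted length-$48$ cycle visits every pair of positions, you get completeness mod $14$ structurally rather than by inspection. This is arguably cleaner, and the same template would scale to other composite moduli; the paper's explicit listing does not. On the other hand, the paper's description of the three long cycles as $F_n$, $3F_n$, $5F_n$ makes the connection to Proposition~\ref{ZeroEquivMultiple} (multiples of the Fibonacci sequence) more visible, which is thematically relevant to the surrounding argument.
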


\begin{proof}
The complete Gibonacci system mod 14 contains the Fibonacci cycle mod 14,

\[ \begin{array}{cccccccc}
\mbox{(1,} & 1, & 2, & 3, & 5, & 8, & 13, & 7, \\
\mbox{6,} & 13, & 5, & 4, & 9, & 13, & 8, & 7, \\
\mbox{1,} & 8, & 9, & 3, & 12, & 1, & 13, & 0, \\
\mbox{13,} & 13, & 12, & 11, & 9, & 6, & 1, & 7, \\
\mbox{8,} & 1, & 9, & 10, & 5, & 1, & 6, & 7, \\
\mbox{13,} & 6, & 5, & 11, & 2, & 13, & 1, & 0) \end{array}\]

which has length 48. The system also contains $3 \cdot F_n$ (mod 14) and $5 \cdot F_n$ (mod 14), each of the same length and completeness, totaling $48 \cdot 3 = 144$ terms. Another 52 terms appear in the following cycles:

(2, 2, 4, 6, 10, 2, 12, 0, 12, 12, 10, 8, 4, 12, 2, 0),

(4, 4, 8, 12, 6, 4, 10, 0, 10, 10, 6, 2, 8, 10, 4, 0),

(6, 6, 12, 4, 2, 6, 8, 0, 8, 8, 2, 10, 12, 8, 6),

(7, 7, 0)

(0).

This totals 144 + 52 = 196 terms, a complete Gibonacci system mod 14. Here as well, each cycle with some $i_j, i_{j+1}$ relatively prime is complete, so all Gibonacci sequences are complete mod 14. $\blacksquare$
\end{proof}

\begin{proof}[Proof of Proposition~\ref{LucasMin}]
Once again, because all Gibonacci sequences are complete mod 14, all Gibonacci sequences are also complete mod 2 and 7. This completes our proof of proposition~\ref{LucasMin}. $\blacksquare$
\end{proof}

Thus, to tie everything from sections 2-4 together, we can say by proposition~\ref{FibToGib} that the Fibonacci sequence is complete in the largest set of $m$ possible, by which we mean that the set of numbers in which any generic Gibonacci sequence is complete must be a subset of the set in which the Fibonacci sequence is complete. To this we can add by proposition~\ref{LucasMin} that the Lucas numbers are complete in the smallest possible set of $m$, meaning that the set of numbers in which any generic Gibonacci sequence is complete must contain the set in which the Lucas numbers are complete.

These facts make us consider several things: first, are there other Gibonacci sequences that are complete in the same set of $m$ as the Lucas numbers or the Fibonacci sequence? Then in contrast, are there any Gibonacci sequences which are complete mod $m$ for some set of $m$ between those extremes? We consider these questions in the following section.

\section{Complete-Equivalence of Gibonacci sequences}

In order to consider the set of $m$ in which a generic Gibonacci sequence is complete mod $m$, we would like a way of comparing these sets between different Gibonacci sequences. We accomplish this by defining both a notation for those sets and an equivalence relation between different Gibonacci sequences.

\begin{definition}
For any Gibonacci sequence $\{G_n(a,b)\}$, the set of $m$ in which $\{G_n(a,b)\}$ is complete mod $m$ will be denoted by $M_{(a,b)}$. We also designate $M_F$ = $M_{(1,1)}$ and $M_L$ = $M_{(1,3)}$.
\end{definition}

This notation will allow us to compare two Gibonacci sequences to determine whether their individual sets of complete $m$ are equal.

\begin{definition}
We say that two Gibonacci sequences $\{G_n(a,b)\}$ and $\{H_n(c,d)\}$ are \emph{complete-equivalent} if it is the case that $M_{(a,b)} = M_{(c,d)}$.
\end{definition}

Having stated that complete-equivalence is an equivalence relation, we now prove it below.

\begin{proposition}
\label{EquivRel}
Complete-equivalence is an equivalence relation.
\end{proposition}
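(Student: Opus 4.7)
The plan is to verify the three defining properties of an equivalence relation: reflexivity, symmetry, and transitivity. Since complete-equivalence of two Gibonacci sequences $\{G_n(a,b)\}$ and $\{H_n(c,d)\}$ is defined entirely by the set equality $M_{(a,b)} = M_{(c,d)}$, each of the three required properties will reduce to the corresponding property of the equality relation on sets, which we may take as given.

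First I would dispose of reflexivity: for any Gibonacci sequence $\{G_n(a,b)\}$, the set $M_{(a,b)}$ equals itself, so $\{G_n(a,b)\}$ is complete-equivalent to $\{G_n(a,b)\}$. Next, for symmetry, I would assume $\{G_n(a,b)\}$ and $\{H_n(c,d)\}$ are complete-equivalent, meaning $M_{(a,b)} = M_{(c,d)}$; by the symmetry of set equality, $M_{(c,d)} = M_{(a,b)}$, so $\{H_n(c,d)\}$ and $\{G_n(a,b)\}$ are complete-equivalent. Finally, for transitivity, I would take three Gibonacci sequences $\{G_n(a,b)\}$, $\{H_n(c,d)\}$, and $\{K_n(e,f)\}$ such that $M_{(a,b)} = M_{(c,d)}$ and $M_{(c,d)} = M_{(e,f)}$; transitivity of set equality then yields $M_{(a,b)} = M_{(e,f)}$, hence $\{G_n(a,b)\}$ and $\{K_n(e,f)\}$ are complete-equivalent.

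There is no real obstacle here: the proposition is a formal consequence of the fact that equality of sets is itself an equivalence relation, and the definition of complete-equivalence simply pulls this property back along the assignment $(a,b) \mapsto M_{(a,b)}$. The only thing worth noting is that the proposition says nothing about which Gibonacci sequences are in fact complete-equivalent to one another; that substantive content is deferred to the subsequent sections, where the extremal roles of the Fibonacci and Lucas sequences (established via Propositions~\ref{FibToGib} and~\ref{LucasMin}) will be used to actually populate the equivalence classes.
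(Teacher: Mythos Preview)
Your proof is correct and follows essentially the same approach as the paper's own proof: both simply verify reflexivity, symmetry, and transitivity by reducing the definition of complete-equivalence to the corresponding properties of set equality on the sets $M_{(a,b)}$. The only difference is cosmetic (the paper names the third sequence $\{I_n(e,f)\}$ rather than $\{K_n(e,f)\}$), and your closing remark about the triviality of the result is an accurate assessment.
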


\begin{proof}
 Consider the three generic Gibonacci sequences $\{G_n(a,b)\}$, $\{H_n(c,d)\}$, and $\{I_n(e,f)\}$. $\{G_n(a,b)\}$ is complete-equivalent to itself, since $M_{(a,b)} = M_{(a,b)}$. Similarly, if $\{G_n(a,b)\}$ is complete-equivalent to $\{H_n(c,d)\}$, then $M_{(a,b)} = M_{(c,d)}$ and it follows that $M_{(c,d)} = M_{(a,b)}$, meaning $\{H_n(c,d)\}$ is complete-equivalent to $\{G_n(a,b)\}$. Lastly, if $\{G_n(a,b)\}$ is complete-equivalent to $\{H_n(c,d)\}$ and $\{H_n(c,d)\}$ is complete-equivalent to $\{I_n(e,f)\}$, then $M_{(a,b)} = M_{(c,d)}$ and $M_{(c,d)} = M_{(e,f)}$, giving $M_{(a,b)} = M_{(e,f)}$, meaning $\{G_n(a,b)\}$ is complete-equivalent to $\{I_n(e,f)\}$. $\blacksquare$
\end{proof}

For two Gibonacci sequences to satisfy the definition of being complete-equivalent, we must check completeness of each sequence mod $m$ for each $m$. We have already done substantial work in this direction, as proposition~\ref{FibToGib} states that for all $m$ where the Fibonacci sequence is defective mod $m$, any generic Gibonacci sequence will also be defective mod $m$. Thus we only need check those numbers $m$ in which the Fibonacci sequence is complete mod $m$.

Furthermore, proposition~\ref{LucasMin} tells us that if the Lucas numbers are complete mod $m$, then all Gibonacci sequences will also be complete mod $m$. Hence, to show that two Gibonacci sequences satisfy the definition of being complete-equivalent, we need only check those $m$ in which the Fibonacci sequence is complete mod $m$ and the Lucas numbers are defective mod $m$. (See the table below.)

\begin{center}
\begin{tabular}{ | l | p{3cm} | p{3cm} | c | }
\hline                        
\rule{0pt}{3ex}$m$ & {$2, 4, 6, 7, 14, 3^j$ \newline for $j \geq 1$} & {$5^k, 2\cdot 5^k, 4 \cdot 5^k, \newline 3^j \cdot 5^k, 6 \cdot 5^k, \newline 7 \cdot 5^k, 14 \cdot 5^k$ \newline for $j, k \geq 1$} & All others \\ \hline
\rule{0pt}{3ex}$\{G_n(a,b)\}$ complete mod $m$ & All sequences & \{$F_n$\} but not \{$L_n$\} & No sequences  \\
\hline  
\end{tabular}
\end{center}

This means that for all $\{G_n(a,b)\}$, $M_{(a,b)}$ contains the numbers in the second column and none of the numbers in the fourth column. All the numbers of interest to us are therefore those in the center column, and they are all multiples of 5, so the rest of this paper will focus on the completeness or defectiveness of $\{G_n(a,b)\}$ mod 5. We begin by making some comments on the complete Gibonacci system mod 5.

The complete Gibonacci system mod 5 is

\{(1, 1, 2, 3, 0, 3, 3, 1, 4, 0, 4, 4, 3, 2, 0, 2, 2, 4, 1, 0),

(1, 3, 4, 2),

(0)\}.

This complete system contains a stark division that we haven't seen in any other $m$. There are only two Gibonacci cycles mod 5, one corresponding to the Fibonacci sequence, which is complete, and the other corresponding to the Lucas numbers, which is defective. Therefore, any Gibonacci sequence complete-equivalent to the Lucas numbers, if one exists, will necessarily correspond to the same Gibonacci cycle mod 5 as the Lucas numbers. Similarly, a Gibonacci sequence complete-equivalent to the Fibonacci sequence, if one exists, will necessarily correspond to the same Gibonacci cycle mod 5 as the Fibonacci sequence.

However, what we don't yet know is whether the converse holds. Does the defectiveness or completeness of $\{G_n(a,b)\}$ mod 5 determine whether it is complete-equivalent to the Lucas numbers or Fibonacci sequence, respectively? In fact, we can already answer the first half of that question:

\begin{proposition}
\label{FiveDefEquiv}
If $\{G_n(a,b)\}$ has the same Gibonacci cycle (mod 5) as the Lucas numbers, then $\{G_n(a,b)\}$ is complete-equivalent to the Lucas numbers.
\end{proposition}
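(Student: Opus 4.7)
The plan is to show $M_{(a,b)} = M_L$ by sandwiching $M_{(a,b)}$ between $M_L$ and $M_F$ and then ruling out every element of $M_F \setminus M_L$. From Proposition~\ref{FibToGib} we already know $M_{(a,b)} \subseteq M_F$, and from Proposition~\ref{LucasMin} we know $M_L \subseteq M_{(a,b)}$. So it suffices to prove that no modulus in $M_F \setminus M_L$ lies in $M_{(a,b)}$.

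First I would observe, by comparing the explicit lists from Burr and Theorem~\ref{CompleteMods}, that $M_F \setminus M_L$ consists precisely of the moduli of the form $5^k, 2\cdot 5^k, 4\cdot 5^k, 3^j \cdot 5^k, 6 \cdot 5^k, 7 \cdot 5^k, 14 \cdot 5^k$ with $k \geq 1$ (and $j \geq 1$). Every such modulus is a multiple of $5$.

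Next I would invoke the hypothesis: since $\{G_n(a,b)\}$ has the same Gibonacci cycle mod $5$ as the Lucas numbers, namely $(1,3,4,2)$, and Lemma~\ref{FiveDefective} tells us that cycle omits the residue $0$, the sequence $\{G_n(a,b)\}$ is defective mod $5$. Now apply Proposition~\ref{mTotm} (Shah's result) with $m = 5$: for every $t \in \mathbb{N}$, $\{G_n(a,b)\}$ is defective mod $5t$. In particular it is defective at every modulus in $M_F \setminus M_L$, forcing $M_{(a,b)} \cap (M_F \setminus M_L) = \emptyset$.

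Combining the three inclusions $M_L \subseteq M_{(a,b)} \subseteq M_F$ and $M_{(a,b)} \cap (M_F \setminus M_L) = \emptyset$ yields $M_{(a,b)} = M_L$, which by definition is complete-equivalence to the Lucas numbers. The argument is essentially a bookkeeping one; there is no real obstacle, since all the heavy lifting has already been carried out in Proposition~\ref{FibToGib}, Proposition~\ref{LucasMin}, Proposition~\ref{mTotm}, and Lemma~\ref{FiveDefective}. The only step requiring any care is verifying that $M_F \setminus M_L$ is entirely contained in the multiples of $5$, which is immediate from the two explicit lists.
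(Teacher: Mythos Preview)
Your proof is correct and follows essentially the same approach as the paper: use the hypothesis to get defectiveness mod $5$, apply Proposition~\ref{mTotm} to rule out all multiples of $5$, and then note (via Propositions~\ref{FibToGib} and~\ref{LucasMin}, or equivalently the paper's summary table) that the multiples of $5$ are exactly what separate $M_F$ from $M_L$. The paper's write-up is terser, appealing to the table rather than naming the two propositions explicitly, but the logic is identical.
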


\begin{proof}
If $\{G_n(a,b)\}$ has the same Gibonacci cycle (mod 5) as the Lucas numbers, then $\{G_n(a,b)\}$ is defective mod 5. By proposition~\ref{mTotm}, $\{G_n(a,b)\}$ is therefore also defective mod $5t$, for all multiples of 5. But as we mentioned above, those are the only numbers we have to check to determine its complete-equivalence. Because $\{G_n(a,b)\}$ is defective mod $5t$ for all $t \in \mathbb{N}$, including those numbers in the third column of the table above, $M_{(a,b)}$ is limited to the numbers in the table's second column. These numbers are all guaranteed to be in $M_{(a,b)}$, and they are the only numbers in $M_L$. Therefore, $M_{(a,b)}$ = $M_L$. $\blacksquare$
\end{proof}

The ease of this proof makes us wonder if we can do the same for the completeness of $\{G_n(a,b)\}$ mod $5$ and its complete-equivalence to the Fibonacci sequence. This would entail showing that if $5 \in M_{(a,b)}$, then all the numbers in the center column of the table above are also in $M_{(a,b)}$. If that is true, then the completeness of $\{G_n(a,b)\}$ mod 5 would give $M_{(a,b)}$ = $M_F$. We will proceed to show exactly this.

To accomplish this we will make use of several results in the existing literature. An important tool in our following proof methods will be what we call the \emph{Gibonacci invariant}, the quantity $|(G_n)^2 + G_n \cdot G_{n+1} - (G_{n+1})^2|$, a previous result in number theory. By calling it an invariant, we mean that the quantity never changes within a Gibonacci sequence no matter what $n$ we choose. We present a proof of this property as our next lemma:

\begin{lemma}
\label{invariant}
The quantity $|(G_n)^2 + G_n \cdot G_{n+1} - (G_{n+1})^2|$ is an invariant for all n in $\{G_n(a,b)\}$.
\end{lemma}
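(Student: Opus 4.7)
The plan is to show that the signed quantity $Q_n := (G_n)^2 + G_n G_{n+1} - (G_{n+1})^2$ satisfies $Q_{n+1} = -Q_n$, so that $|Q_{n+1}| = |Q_n|$ and the absolute value is invariant by induction on $n$.

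First I would fix the Gibonacci recurrence $G_{n+2} = G_n + G_{n+1}$ and substitute it into the definition of $Q_{n+1}$:
\begin{align*}
Q_{n+1} &= (G_{n+1})^2 + G_{n+1} G_{n+2} - (G_{n+2})^2 \\
        &= (G_{n+1})^2 + G_{n+1}(G_n + G_{n+1}) - (G_n + G_{n+1})^2.
\end{align*}
Then I would expand the last term as $(G_n)^2 + 2 G_n G_{n+1} + (G_{n+1})^2$ and collect like terms. The $(G_{n+1})^2$ from the first summand and the $(G_{n+1})^2$ inside the middle summand should combine with the $-(G_{n+1})^2$ from the expanded square, leaving a net $(G_{n+1})^2$; the $G_n G_{n+1}$ from the middle and the $-2 G_n G_{n+1}$ from the square combine to $-G_n G_{n+1}$; the remaining $-(G_n)^2$ is just carried along. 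This yields
\[
Q_{n+1} = -(G_n)^2 - G_n G_{n+1} + (G_{n+1})^2 = -Q_n.
\]

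Taking absolute values gives $|Q_{n+1}| = |Q_n|$, so a straightforward induction on $n \geq 1$ establishes that $|Q_n|$ equals $|Q_1| = |a^2 + ab - b^2|$ for every $n$, which is the invariance claimed in the lemma.

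The computation is purely algebraic and I do not expect any real obstacle; the only thing to be careful about is tracking signs when expanding $(G_n + G_{n+1})^2$, since it is the cross-term $-2 G_n G_{n+1}$ (rather than $-G_n G_{n+1}$) that is responsible for flipping the overall sign of the expression. One could alternatively phrase the same computation in matrix form using $\det \begin{pmatrix} G_n & G_{n+1} \\ G_{n+1} & G_{n+2} \end{pmatrix}$ and the fact that $\begin{pmatrix} 0 & 1 \\ 1 & 1 \end{pmatrix}$ has determinant $-1$, but the direct expansion above is the shortest route and fits the style of the paper.
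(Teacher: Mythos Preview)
Your proposal is correct and follows essentially the same approach as the paper: substitute $G_{n+2}=G_n+G_{n+1}$ into the expression at index $n+1$, expand, and observe that it equals the negative of the expression at index $n$, so the absolute value is unchanged. The only cosmetic difference is that you track the signed quantity $Q_n$ and deduce $Q_{n+1}=-Q_n$ before taking absolute values, whereas the paper carries the absolute value bars through the whole computation.
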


\begin{proof}
We proceed by induction. Let $|(G_n)^2 + G_n \cdot G_{n+1} - (G_{n+1})^2|$ = L. Then we want $|(G_{n+1})^2 + G_{n+1} \cdot G_{n+2} - (G_{n+2})^2| = L.$ But $G_{n+2} = G_n + G_{n+1}$. Making this substitution gives 

$$
\begin{array}{rll}
& &|(G_{n+1})^2 + G_{n+1} \cdot G_{n+2} - (G_{n+2})^2| \\
& = & |(G_{n+1})^2 + G_{n+1} \cdot (G_n + G_{n+1}) - (G_n + G_{n+1})^2| \\
& = & |(G_{n+1})^2 + G_n \cdot G_{n+1} + (G_{n+1})^2 - (G_n)^2 - 2G_n \cdot G_{n+1} - (G_{n+1})^2| \\
& = & |-(G_n)^2 - G_n \cdot G_{n+1} + (G_{n+1})^2| \\
& = & |(G_n)^2 + G_n \cdot G_{n+1} - (G_{n+1})^2| \\
& = & L.
\end{array}
$$

$\blacksquare$
\end{proof}

Because the Gibonacci invariant doesn't change throughout the sequence, we can determine its value for both the Fibonacci sequence and the Lucas numbers. From the Fibonacci sequence we select the consecutive terms 1 and 1, yielding $|1 + 1 - 1| = 1$. From the Lucas numbers we select 1 and 3, yielding $|1 + 4 - 9| = 5$.

If we consider Gibonacci sequences mod 5, we arrive at an interesting result concerning the Gibonacci invariant. Recall from above that the complete Gibonacci system mod 5 is

\{(1, 1, 2, 3, 0, 3, 3, 1, 4, 0, 4, 4, 3, 2, 0, 2, 2, 4, 1, 0),

(1, 3, 4, 2),

(0)\}.

The Gibonacci invariant varies within these cycles because they are representative of some $\{G_n(a,b)\}$ being considered mod 5. For example, consider the Gibonacci cycle corresponding to the Fibonacci sequence. Selecting 4 and 0 gives $|4^2 + 4 \cdot 0 - 0^2|$ = 16, while selecting 1 and 1 gives $|1^2 + 1 \cdot 1 - 1^2|$ = 1. However, its variance within the cycles is more limited if we consider the invariant itself mod $m$, for if $|(G_n)^2 + G_n \cdot G_{n+1} - (G_{n+1})^2|$ = $L$, then $(G_n)^2 + G_n \cdot G_{n+1} - (G_{n+1})^2$ (mod $m$) $\equiv$ $\pm L$ (mod $m$). Hence selecting any two consecutive numbers from the cycle corresponding to the Fibonacci sequence will yield an invariant of $\pm 1$ (mod 5), and selecting any two consecutive numbers from the cycle corresponding to the Lucas numbers will yield an invariant of 0 (mod 5). Because these are the only two Gibonacci cycles mod 5 other than the trivial cycle, all Gibonacci cycles must have a Gibonacci invariant of either 0 or $\pm 1$ when the invariant is considered mod 5. This is convenient, because it gives us an easy way to check whether a Gibonacci sequence is complete or defective mod 5. If the Gibonacci invariant is 0 (mod 5), then the sequence has the same Gibonacci cycle mod 5 as the Lucas numbers and is hence defective mod 5. Conversely, if the Gibonacci invariant is $\pm 1$ (mod 5), then the sequence has the same Gibonacci cycle mod 5 as the Fibonacci sequence and is hence complete mod 5. We present our final theorem based on these observations.

\begin{theorem}
\label{InvarComp}
If $a^2 + ab - b^2 \equiv 0$ (mod 5), then $M_{(a,b)} = M_L$. Otherwise, $M_{(a,b)} = M_F$.
\end{theorem}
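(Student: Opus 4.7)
The proof splits into two cases depending on whether $a^2 + ab - b^2 \equiv 0 \pmod 5$. In the vanishing case, the discussion immediately preceding the theorem shows that the only nontrivial Gibonacci cycle mod $5$ with invariant $0 \pmod 5$ is the Lucas cycle, and $\{G_n(a,b)\}$ mod $5$ cannot be the trivial cycle because $\gcd(a,b) = 1$ prevents $5 \mid a$ and $5 \mid b$ from holding simultaneously. Hence $\{G_n(a,b)\}$ mod $5$ agrees with the Lucas cycle, and Proposition~\ref{FiveDefEquiv} immediately yields $M_{(a,b)} = M_L$.

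In the remaining case $a^2 + ab - b^2 \not\equiv 0 \pmod 5$, the same discussion forces $\{G_n(a,b)\}$ mod $5$ to coincide with the Fibonacci cycle, so $\{G_n(a,b)\}$ is complete mod $5$. I would then establish $M_{(a,b)} = M_F$ by proving both containments. The inclusion $M_{(a,b)} \subseteq M_F$ is the contrapositive of Proposition~\ref{FibToGib}. For $M_F \subseteq M_{(a,b)}$, the $k=0$ elements $\{2, 4, 3^j, 6, 7, 14\}$ are already in $M_{(a,b)}$ by Proposition~\ref{LucasMin}, so the work reduces to showing completeness of $\{G_n(a,b)\}$ mod $m$ for $m \in \{5^k, 2 \cdot 5^k, 4 \cdot 5^k, 3^j \cdot 5^k, 6 \cdot 5^k, 7 \cdot 5^k, 14 \cdot 5^k\}$ with $k \geq 1$.

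For these remaining moduli $m$, the plan is to imitate Lemma~\ref{ThreeNComp}: show that $\{G_n(a,b)\}$ mod $m$ contains a $0$, so that by Proposition~\ref{ZeroEquivMultiple} it takes the form $k' \cdot F_{n+r}$ mod $m$ for some $k'$, and then show $\gcd(k', m) = 1$ so that the rescaling is a bijection and completeness descends from that of the Fibonacci sequence mod $m$. The invariant identity, applied to both sides, forces $L := a^2 + ab - b^2 \equiv \pm (k')^2 \pmod{m}$; combined with $L \not\equiv 0 \pmod 5$ this gives $\gcd(k', 5) = 1$. For the coprime-to-$5$ part $c = m/5^k \in \{1, 2, 4, 3^j, 6, 7, 14\}$, the completeness of $\{G_n(a,b)\}$ mod $c$ guaranteed by Proposition~\ref{LucasMin} forces $\gcd(k', c) = 1$, since otherwise $k' F_{n+r}$ would be trapped in a proper subgroup of $\mathbb{Z}/c$. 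Together these give $\gcd(k', m) = 1$.

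The main obstacle, and the step that requires the most care, is establishing that $\{G_n(a,b)\}$ mod $m$ actually contains a $0$. For $m = 5^k$ this follows from Burr's inductive construction of the complete Gibonacci system mod $5^k$ (parallel to the $3^j$ case recalled before Lemma~\ref{ThreeNComp}): the system decomposes into the unit-multiple cycles $C_\alpha$ with $\gcd(\alpha,5)=1$, plus $5$ times a complete system mod $5^{k-1}$. Since $\{G_n(a,b)\}$ mod $5$ is the nontrivial Fibonacci cycle rather than identically zero, $\{G_n(a,b)\}$ mod $5^k$ cannot sit in the ``multiplied by $5$'' part, so it lies in some $C_\alpha$, which contains a $0$. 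For the composite moduli $c \cdot 5^k$ I would extend this construction (or alternatively run a Chinese Remainder Theorem bookkeeping argument using the ranks of apparition of the Fibonacci sequence mod $5^k$ and mod $c$) to likewise place $\{G_n(a,b)\}$ mod $c \cdot 5^k$ in a cycle containing a $0$; this composite case is where the most delicate bookkeeping will live, because completeness mod $5^k$ and mod $c$ does not, on its own, guarantee that the zero positions coincide mod the two factors.
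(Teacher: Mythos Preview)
Your first case matches the paper's. In the case $a^2+ab-b^2\not\equiv 0\pmod 5$ you diverge, and the step you yourself flag as the main obstacle is never closed.

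The paper does not argue via zero-existence and Proposition~\ref{ZeroEquivMultiple} at all. Instead it proves the inductive step ``complete mod $m$ $\Rightarrow$ complete mod $5m$'' directly (Lemmas~\ref{FiveToFiveM} and~\ref{AnotherFiveToFiveM}) by observing that Burr's Lemma~3 applies verbatim to $\{G_n(a,b)\}$. Burr's lemma says that if the period mod $5m$ is five times the period $k$ mod $m$, then for each residue $r$ hit mod $m$ the terms $G_n,G_{k+n},\dots,G_{4k+n}$ hit $r,m+r,\dots,4m+r$ mod $5m$. The paper checks that the only ingredients Burr's proof uses---the structure of the Fibonacci cycle mod $5$ in the $5\nmid m$ subcase, and the nonvanishing of the invariant mod $5$ in the $5\mid m$ subcase---are shared by any $\{G_n(a,b)\}$ with invariant $\not\equiv 0\pmod 5$, while Wall's Theorems~2 and~9 supply the required period-quintupling. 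Iterating $m\mapsto 5m$ from the base moduli in $M_L$ then fills out all of $M_F$.

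Your unit-multiple route could in principle be completed, but two pieces are missing. First, the decomposition you invoke for the complete Gibonacci system mod $5^k$ is not the $3^j$ story verbatim: mod $5^k$ there is an additional family of Lucas-type cycles (those reducing to $(1,3,4,2)$ mod $5$) that are neither unit multiples of the Fibonacci cycle nor divisible by $5$, so ``not in the multiplied-by-$5$ part'' does not by itself force membership in some $C_\alpha$. You need the invariant hypothesis to exclude that family and a further counting argument (essentially Wall's Theorem~9 again) to see that the cycles reducing to the Fibonacci cycle mod $5$ are exactly the $C_\alpha$. Second, and more seriously, the composite case $c\cdot 5^k$ is left as a plan: aligning zero positions across the $5^k$ and $c$ factors via CRT is genuine work that you do not carry out. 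The paper's period-quintupling argument sidesteps the zero-existence question entirely, which is precisely why it is the cleaner route here.
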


We already have the first half of this theorem from lemma~\ref{FiveDefEquiv}, because if $\{G_n(a,b)\}$ has a Gibonacci invariant congruent to 0 (mod 5), then it has the same Gibonacci cycle mod 5 as the Lucas numbers, which is what lemma~\ref{FiveDefEquiv} assumes. To show the second half, we require two results of Wall~\cite{Wall60} and one of Burr's lemmas, so we reproduce them in our own words below:

\begin{proof}[Theorem 2 (Wall)]
If $m$ has the prime factorization $m = \prod{{p_i}^{e_i}}$ and if $h_i$ denotes the length of the cycle of $\{G_n(a,b)\}$ mod ${p_i}^{e_i}$, then the length of the cycle of $\{G_n(a,b)\}$ mod $m$ is the least common multiple of the $h_i$.
\end{proof}

This theorem says that if we know the lengths of a Gibonacci sequence for each prime power in the prime factorization of $m$, then we can determine the length of that Gibonacci sequence mod $m$, as well. Specifically of interest to us is the length of a Gibonacci sequence mod $5^k$, since all $m$ which we are still interested in are multiples of 5. This leads us to the next result of Wall to be considered.

\begin{proof}[Theorem 9 (Wall)]
If $m = 5^k$, then $\{G_n(a,b)\}$ either has the same length mod $m$ as the Fibonacci sequence, or it has a length which is a fifth of the length of the Fibonacci sequence mod $m$, determined by whether or not the Gibonacci invariant is divisible by 5.
\end{proof}

This means that for any Gibonacci sequence $\{G_n(a,b)\}$ with $a^2 + ab - b^2 \equiv 0$ (mod 5), the length of $\{G_n(a,b)\}$  mod $5^k$ is shorter than the Fibonacci cycle mod $5^k$ by a factor of 5. Otherwise, $\{G_n(a,b)\}$ mod $5^k$ will have the same length as the Fibonacci cycle mod $5^k$. We have already seen this in the mod 5 case. The Lucas numbers have a Gibonacci cycle of length 4, which is one fifth of 20, the length of the Fibonacci cycle mod 5.

We should also note that this applies to all $t \cdot 5^k$ for $t \in \{2, 4, 6, 7, 14, 3^j\}$. We can make that assertion because the length of the Gibonacci cycles is the same within each of those mods regardless of $a^2 + ab - b^2$ (mod 5). We observe from our previous construction of complete Gibonacci systems that $\{G_n(a,b)\}$ has length 3 mod 2, length 6 mod 4, length 24 mod 6, and length 48 mod 14. $\{G_n(a,b)\}$ also has length 16 mod 7 (following from Wall's Theorem 2), and length $8*3^{j-1}$ mod $3^j$ (a result of Burr).

The last piece we must cover is Burr's extension of the completeness of the Fibonacci sequence mod 5 to its completeness mod $5t$. Burr does this by proving his own lemma, which we restate in our own words here:

\begin{proof}[Lemma 3 (Burr)]
Suppose that the Fibonacci cycle mod $m$ has length $k$, and that it has length $5k$ mod $5m$. For some $n$ and $a$ let $F_n \equiv a$ (mod $m$). Then $F_n, F_{k+n}, ..., F_{4k+n}$ are congruent to $a, m+a, ..., 4m+a$ (mod $5m$) in some order.
\end{proof}

We present Burr's proof of this lemma in Appendix A. The lemma, along with Wall's results, says that if the Fibonacci cycle mod 5 contains $a$, then the Fibonacci cycle mod $5m$ will contain $a, m+a, ..., 4m+a$. Hence, to put it another way, because the Fibonacci sequence is complete mod 5, if the conditions on $m$ and $k$ in Burr's lemma hold, then the Fibonacci sequence will be complete mod $5m$. This is exactly what we want to say about any Gibonacci sequence which has the same Gibonacci cycle mod $5$ as the Fibonacci sequence. We now proceed with these final proofs, working in cases depending on whether or not $5 \mid m$:

\begin{lemma}
\label{FiveToFiveM}
Let $a^2 + ab - b^2 \not\equiv 0$ (mod 5), and assume $\{G_n(a,b)\}$ is complete mod $m$ with $m \not\equiv 0$ (mod 5). Then $\{G_n(a,b)\}$ is also complete mod $5m$.
\end{lemma}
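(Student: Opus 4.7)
The plan is to generalize Burr's Lemma 3 by combining Wall's Theorems 2 and 9 with the explicit structure of the Fibonacci cycle mod 5. Since $a^2+ab-b^2 \not\equiv 0 \pmod 5$, the classification of non-trivial Gibonacci cycles mod 5 given just before the lemma forces $\{G_n(a,b)\}$ mod 5 to be a cyclic rotation of the Fibonacci cycle mod 5, so I may fix a shift $s$ with $G_n \equiv F_{n+s} \pmod 5$ for every $n$, and Wall's Theorem 9 gives cycle length $20$ mod $5$. Let $k$ be the cycle length of $\{G_n\}$ mod $m$. Since $\gcd(5,m)=1$, Wall's Theorem 2 applied to the coprime factorization $5m = 5 \cdot m$ gives cycle length $L = \mathrm{lcm}(20,k)$ mod $5m$.

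Next, for any target residue $\rho$ mod $5m$, I would decompose via CRT into $\rho_1 \equiv \rho \pmod m$ and $\rho_2 \equiv \rho \pmod 5$, then use completeness mod $m$ to pick $n_0$ with $G_{n_0} \equiv \rho_1 \pmod m$. Every term in the arithmetic-progression subsequence $G_{n_0+jk}$ stays $\equiv \rho_1 \pmod m$, so by CRT it suffices to find some $j$ with $F_{n_0+s+jk} \equiv \rho_2 \pmod 5$. As $j$ varies, the residues $F_{n_0+s+jk} \pmod 5$ form exactly the image, under $n \mapsto F_n \pmod 5$, of the coset $(n_0+s)+\langle k\rangle$ inside $\mathbb{Z}/20\mathbb{Z}$. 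The lemma thus reduces to showing that this image is all of $\mathbb{Z}/5\mathbb{Z}$.

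The main obstacle is this mod-5-surjectivity claim. By the classification established previously, completeness mod $m$ with $\gcd(5,m)=1$ restricts $m$ to $\{2,4,6,7,14,3^j\}$, with corresponding cycle lengths $k \in \{3,6,24,48,16,8\cdot 3^{j-1}\}$ noted just before the lemma. In every case $\gcd(k,20) \in \{1,2,4\}$, so $\langle k \rangle \leq \mathbb{Z}/20\mathbb{Z}$ has order $5$, $10$, or $20$; in particular its order is a multiple of $5$. A brief direct check of the listed Fibonacci cycle mod 5 then shows that for each subgroup $H \leq \mathbb{Z}/20\mathbb{Z}$ whose order is divisible by $5$ (the smallest being $\{0,4,8,12,16\}$, on which the Fibonacci values are $\{0,3,1,4,2\}$) and each coset of $H$, the Fibonacci values surject onto $\mathbb{Z}/5\mathbb{Z}$; the order-$10$ and order-$20$ cases inherit surjectivity from the order-$5$ cosets they contain. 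Surjectivity would collapse if $5 \mid k$, since then the orbit could be trapped in $\{0,5,10,15\}$, on which $F_n \equiv 0 \pmod 5$ identically; this is precisely why the explicit list of complete moduli is essential. With the claim in hand, some $j$ yields $G_{n_0+jk} \equiv \rho \pmod{5m}$ by CRT, and completeness mod $5m$ follows.
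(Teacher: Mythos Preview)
Your proof is correct and follows the same underlying idea as the paper: since the invariant condition forces $\{G_n\}$ mod~5 to coincide with the Fibonacci cycle mod~5, one steps through that cycle with stride equal to the period $k$ modulo $m$ and checks that all residues mod~5 are hit, then concludes via CRT. The paper's own proof is essentially a one-sentence appeal to Burr's Lemma~3 (Appendix~A, case $5\nmid m$), which establishes exactly this surjectivity under the standing hypothesis that the period mod $5m$ equals $5k$.

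Where your argument differs is in how the key divisibility condition on $k$ is secured. Burr's Lemma~3 \emph{assumes} the period mod $5m$ is $5k$, from which $k\equiv 4,8,12,16\pmod{20}$ follows and the four stride classes are checked by hand. You instead invoke Proposition~\ref{FibToGib} and Burr's classification of Fibonacci-complete moduli to pin $m$ down to the explicit list $\{2,4,6,7,14,3^j\}$, read off the corresponding periods $k$, and verify $\gcd(k,20)\in\{1,2,4\}$ directly. This buys you something: for $m=2$ and $m=4$ the period mod $5m$ is in fact $20k$ and $10k$, not $5k$, so Burr's Lemma~3 does not literally apply there, while your coset argument absorbs these cases automatically (the larger orbits contain the order-5 cosets on which you checked surjectivity). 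The paper's terse ``Burr's proof applies directly'' glosses over this point; your version is the more careful of the two.
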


\begin{proof}
Because $\{G_n(a,b)\}$ has a Gibonacci invariant which is not congruent to 0 (mod 5), by Wall's Theorem 9 we know that $\{G_n(a,b)\}$ mod 5 has the same length as the Fibonacci sequence mod 5. Burr, in showing this result for the Fibonacci sequence, solely relies on the nature of the Gibonacci cycle mod 5 corresponding to the Fibonacci sequence mod 5 (see Appendix A). This property holds for $\{G_n(a,b)\}$ as well, since $\{G_n(a,b)\}$ has the same Gibonacci cycle mod 5 as the Fibonacci sequence. Therefore, Burr's proof applies directly to our lemma. $\blacksquare$
\end{proof}

\begin{lemma}
\label{AnotherFiveToFiveM}
Let $a^2 + ab - b^2 \not\equiv 0$ (mod 5), and assume $\{G_n(a,b)\}$ is complete mod $m$ with $m \equiv 0$ (mod 5). Then $\{G_n(a,b)\}$ is complete mod $5m$ as well.
\end{lemma}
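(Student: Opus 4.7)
The plan is to parallel the proof of Lemma~\ref{FiveToFiveM}: once we verify that the cycle length of $\{G_n(a,b)\}$ mod $5m$ is exactly $5$ times its cycle length mod $m$, Burr's Lemma 3 applies directly to $\{G_n(a,b)\}$, because its proof uses only the mod-$5$ cycle structure and, by the invariant hypothesis, $\{G_n(a,b)\}$ shares its mod-$5$ cycle with the Fibonacci sequence (exactly as was exploited in Lemma~\ref{FiveToFiveM}). The real work is therefore in establishing the length ratio, which is where the present lemma diverges from the previous one because here $5 \mid m$.

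To establish the ratio, I would write $m = 5^k t$ with $k \geq 1$ and $\gcd(t,5) = 1$. Completeness of $\{G_n(a,b)\}$ mod $m$ forces the Fibonacci sequence to be complete mod $m$ as well by Proposition~\ref{FibToGib}, so $t \in \{1, 2, 4, 3^j, 6, 7, 14\}$. Writing $\pi(N)$ for the cycle length of $\{G_n(a,b)\}$ mod $N$, Wall's Theorem 2 gives $\pi(5^j t) = \mathrm{lcm}(\pi(5^j), \pi(t))$, and Wall's Theorem 9 together with the invariant hypothesis gives $\pi(5^j) = 4 \cdot 5^j$. The possible values of $\pi(t)$ for admissible $t$ are the lengths $1, 3, 6, 8 \cdot 3^{j-1}, 24, 16, 48$ listed earlier in this section, every one of which is coprime to $5$. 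Consequently
\[\pi(5m) = \mathrm{lcm}(4 \cdot 5^{k+1}, \pi(t)) = 5 \cdot \mathrm{lcm}(4 \cdot 5^k, \pi(t)) = 5\pi(m),\]
which is exactly the hypothesis needed for Burr's Lemma 3.

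With the length ratio in hand, the Gibonacci extension of Burr's Lemma 3 justified in Lemma~\ref{FiveToFiveM} guarantees that for each residue $a$ occurring in the cycle of $\{G_n(a,b)\}$ mod $m$, all five residues $a, m+a, 2m+a, 3m+a, 4m+a$ occur in the cycle mod $5m$. Since $\{G_n(a,b)\}$ is complete mod $m$ by hypothesis, every residue mod $m$ appears, and hence every residue mod $5m$ appears, finishing the proof. I expect the one genuinely delicate point to be the coprimality $\gcd(\pi(t), 5) = 1$ for every admissible $t$: if any such $\pi(t)$ carried a factor of $5$, the ratio $\pi(5m)/\pi(m)$ would drop below $5$ and Burr's lemma would fail to apply. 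Fortunately the earlier enumeration of cycle lengths confirms the required coprimality in every case.
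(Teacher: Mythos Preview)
Your approach is essentially the paper's: both arguments use Wall's Theorems 2 and 9 (together with the enumerated cycle lengths for the admissible cofactors $t$) to verify that the period mod $5m$ is five times the period mod $m$, and then invoke the Gibonacci extension of Burr's Lemma~3 under the invariant hypothesis. One small correction worth making: the $5\mid m$ case of Burr's Lemma~3 does \emph{not} generalize ``because its proof uses only the mod-$5$ cycle structure'' as in Lemma~\ref{FiveToFiveM}; rather, that case proceeds by direct algebraic manipulation of the invariant (see Appendix~A), and it extends to $\{G_n(a,b)\}$ precisely because your hypothesis $a^2+ab-b^2\not\equiv 0\pmod 5$ replaces the Fibonacci value $\pm 1$ by $\pm L$ without disturbing the argument.
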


\begin{proof}
As in the previous lemma, $\{G_n(a,b)\}$ mod $5^k$ has the same length as the Fibonacci sequence mod $5^k$ because $a^2 + ab - b^2 \not\equiv 0$ (mod 5). Burr's proof for the Fibonacci sequence relies on the lengths of Gibonacci cycles corresponding to the Fibonacci sequence and the fact that the Gibonacci invariant is not divisible by 5 in the Fibonacci sequence (see Appendix A). Since $\{G_n(a,b)\}$ has the same Gibonacci cycle mod 5 as the Fibonacci sequence, Wall's Theorems 2 and 9 tell us that the length of $\{G_n(a,b)\}$ mod $5m$ will be the same as the length of the Fibonacci sequence mod $5m$. Hence, we only need the observation that any Gibonacci sequence having the same Gibonacci cycle mod 5 as the Fibonacci sequence mod 5 has a Gibonacci invariant not divisible by 5, which we have by hypothesis, and Burr's proof applies directly to this lemma as well. $\blacksquare$
\end{proof}

\begin{proof}[Proof of Theorem~\ref{InvarComp}]
If $a^2 + ab - b^2 \equiv 0$ (mod 5), then by lemma~\ref{FiveDefEquiv} we have $M_{(a,b)} = M_L$. Otherwise, by lemmas~\ref{FiveToFiveM} and ~\ref{AnotherFiveToFiveM}, we have $M_{(a,b)} = M_L$. $\blacksquare$
\end{proof}

Hence, to determine $M_{(a,b)}$, we need only consider whether $a^2 + ab - b^2 \equiv 0$ (mod 5). We have thus completed our categorization of which $m$ have the property that $\{G_n(a,b)\}$ is complete mod $m$.

\section{Further Generalizations}

Having categorized all $M_{(a,b)}$, we can move on to other generalizations of Fibonacci-like sequences. We will confine our discussion herein to what we have termed as \emph{Tribonacci sequences}, sequences where each term is the sum of the last \emph{three} terms. These sequences are similar to Gibonacci sequences in some ways and different from them in others. This section will explore some of those similarities and differences. The first similarity is their definition, which is much the same as Gibonacci sequences:

\begin{definition}
A \emph{Tribonacci sequence} is the numerical sequence $\{T_n(a, b, c)\}$ where $a, b, c \in \mathbb{Z}$, satisfying the three following conditions:
\begin{enumerate}
  \item $\gcd(a, b, c) = 1$.
  \item $T_1(a, b, c) = a, T_2(a, b, c) = b$, and $T_3(a, b, c) = c$.
  \item $T_{n+1}(a, b, c) = T_{n-2}(a, b, c) + T_{n-1}(a, b, c) + T_n(a, b, c)$.
\end{enumerate}
\end{definition}

For example, $T_n(1, 1, 1) = \{1, 1, 1, 3, 5, 9, 17, ...\}$ and $T_n(1, 1, 2) = \{1, 1, 2, 4, 7, 13, ...\}$. We will refer to these sequences throughout this section, so we will let $T_n(1, 1, 1) = A_n$ and $T_n(1, 1, 2) = B_n$.

Individual Tribonacci sequences behave much like individual Gibonacci sequences. They are cyclical, with a proof similar to the proof for proposition~\ref{GibCyclical}, using repetition of some triplet mod $m$ instead of some pair. This allows for Tribonacci cycles:

\begin{definition}
We say the finite sequence of integers $(i_1, i_2, ..., i_{h-1}, i_h)$ (mod $m$) is a \emph{Tribonacci cycle mod m} if it satisfies the following conditions:

\begin{enumerate}
  \item $i_{n+3} \equiv i_n + i_{n+1} + i_{n+2}$ (mod $m$) for $n = 1, 2, ..., h-3$.
  \item $i_{h-2} + i_{h-1} + i_h \equiv i_1$ (mod $m$), $i_{h-1}+ i_h + i_1 \equiv i_2$ (mod $m$), and $i_h+ i_1 + i_2 \equiv i_3$ (mod $m$).
  \item No $n < h$ satisfies the relationships in condition 2.
\end{enumerate}
\end{definition}

We will begin our exploration of the differences between Tribonacci sequences and Gibonacci sequences with a consideration of Tribonacci sequences mod 2. $A_n$ (mod 2) is (1), since no even number ever appears, and $B_n$ (mod 2) is (1, 1, 0, 0). This is quite different from our discussion of Gibonacci sequences, where all Gibonacci sequences were complete mod 2. Here $A_n$ is defective mod 2, while $B_n$ is complete.

We can also define what it means for Tribonacci cycles to be equivalent:

\begin{definition}
We say two Tribonacci cycles $(i_1, i_2, ..., i_{h-1}, i_h)$ (mod $m$) and $(j_1, j_2, ..., j_{h'-1}, j_{h'})$ (mod $m$) are \emph{equivalent} if

\begin{enumerate}
\item $h = h'$.
\item For some $0 \leq n, r \leq h$, $i_n = j_r$, $i_{n+1 (mod \ h)} = j_{r+1 (mod \ h)}$, and $i_{n+2 (mod \ h)} = j_{r+2 (mod \ h)}$.
\end{enumerate}
\end{definition}

So (1, 1, 1, 0, 2, 0, 2, 1, 0, 0) = (0, 2, 1, 0, 0, 1, 1, 1, 0, 2) since they are the same lengths and contain the same numbers, just shifted over in the cycle.

We can also create a set of all inequivalent Tribonacci cycles mod $m$.

\begin{definition}
A \emph{complete Tribonacci system mod $m$} is the set of all inequivalent Tribonacci cycles mod $m$.
\end{definition}

A complete Tribonacci system mod $m$ contains a total of $m^3$ terms. This can be proved with a proof similar to the proof of proposition~\ref{MSquared}, taking all possible triplets $[a, b, c]$ and forming cycles by merging $[a, b, c]$ to $[b, c, d]$ if $a + b + c \equiv d$ (mod $m$). As this process continues, each triplet would effectively merge away two of its terms, leaving $3m^3 - 2m^3 = m^3$ terms. For example, the complete Tribonacci system mod 2 is \\ \{(0), (1), (1, 1, 0, 0), (1, 0)\}, with 8 terms.

One of our first considerations for Tribonacci sequences is whether or not there are corollaries of the Fibonacci sequence and the Lucas numbers, i.e., two Tribonacci sequences whose sets of complete $m$ form a subset and superset of all complete $m$. However, this does not appear to be the case. Consider $B_n$ mod 9: It has the Tribonacci cycle

\[\begin{array}{ccccccccccccc}
\mbox{(1,} & 1, & 2, & 4, & 7, & 4, & 6, & 8, & 0, & 5, & 4, & 0, & 0, \\
\mbox{4,} & 4, & 8, & 7, & 1, & 7, & 6, & 5, & 0, & 2, & 7, & 0, & 0, \\
\mbox{7,} & 7, & 5, & 1, & 4, & 1, & 6, & 2, & 0, & 8, & 1, & 0, & 0).\end{array}\]

It is defective because it doesn't contain 3. However, $A_n$ mod 9 =

\[\begin{array}{ccccccccccccc}
\mbox{(1,} & 1, & 1, & 3, & 5, & 0, & 8, & 4, & 3, & 6, & 4, & 4, & 5, \\
\mbox{4,} & 4, & 4, & 3, & 2, & 0, & 5, & 7, & 3, & 6, & 7, & 7, & 2, \\
\mbox{7,} & 7, & 7, & 3, & 8, & 0, & 2, & 1, & 3, & 6, & 1, & 1, & 8).\end{array}\]

This contains all residues, so $A_n$ is complete mod 9. Thus $B_n$ is complete mod 2 and defective mod 9, while $A_n$ is defective mod 2 and complete mod 9. This reversal of completeness and defectiveness never occurred with any Gibonacci sequence. It doesn't make the existence of two Tribonacci sequences similar to the Fibonacci sequence and the Lucas numbers impossible, since some sequence could be both complete mod 2 and 9 or defective mod 2 and 9, but it does make their existence unlikely. For example, $T_n(1, 2, 3)$ is both complete mod 2 and mod 9, but is defective mod 67, while both $A_n$ and $B_n$ are complete mod 67. This gives us enough confidence to make the following conjecture, using $M_{(a, b, c)}$ as the set of $m$ in which $\{T_n(a,b,c)\}$ is complete mod $m$:

\begin{proof}[Conjecture]
There is no $a, b, c$ so that $M_{(a,b,c)} \subset M_{(a', b', c')}$ or $M_{(a', b', c')} \subset M_{(a,b,c)}$ for all $a', b', c'$.
\end{proof}

An important characteristic of complete Gibonacci systems was the fact that no $\{G_n(a,b)\}$ was complete mod $p$ for any prime $p$ larger than 7. Thus the discussion became a discussion of composite numbers. This is perhaps the most noteworthy difference between Tribonacci sequences and Gibonacci sequences. Using a computer program, it has been determined that both $A_n$ and $B_n$ are complete mod $p$ in approximately 61\% of the first 300 primes (see tables below). There appears to be some correlation between $A_n$ and $B_n$ as to the $p$ in which they are complete mod $p$, but as often as not one will be complete and the other will be defective. The percentage is also not strictly decreasing, as $A_n$ is complete in 59\% of the primes from 547 to 1151 (the 101st to 200th), but is complete in 63\% of the primes from 1153 to 1987 (the 201st to 300th).

\begin{center}
\begin{tabular}{ | l | p{4cm} | c | p{4cm} | c | }
\hline                        
Primes $p$ by 20s& Number of $p$ such that $A_n$ is complete mod $p$& \% & Number of $p$ such that $B_n$ is complete mod $p$& \% \\ \hline
2 - 71& 12 & 60\% & 16 & 80\% \\ \hline
73 - 173& 11 & 55\% & 12 & 60\% \\ \hline
179 - 281& 11 & 55\% & 12 & 60\% \\ \hline
283 - 409& 14 & 70\% & 12 & 60\% \\ \hline
419 - 541& 14 & 70\% & 12 & 60\% \\ \hline
547 - 659& 12 & 60\% & 12 & 60\% \\ \hline
661 - 809& 12 & 60\% & 11 & 55\% \\ \hline
811 - 941& 11 & 55\% & 12 & 60\% \\ \hline
947 - 1069& 16 & 80\% & 14 & 70\% \\ \hline
1087 - 1223& 8 & 40\% & 10 & 50\% \\ \hline
1229 - 1373& 10 & 50\% & 11 & 55\% \\ \hline
1381 - 1511& 14 & 70\% & 12 & 60\% \\ \hline
1523 - 1657& 9 & 45\% & 12 & 60\% \\ \hline
1663 - 1811& 15 & 75\% & 12 & 60\% \\ \hline
1823 - 1987& 15 & 75\% & 14 & 70\% \\ \hline 
\end{tabular}
\end{center}

\begin{center}
\begin{tabular}{ | l | p{4cm} | c | p{4cm} | c | }
\hline                        
By 100s & Number of $p$ such that $A_n$ is complete mod $p$& \% & Number of $p$ such that $B_n$ is complete mod $p$& \% \\ \hline
2 - 541& 62 & 62\% & 64 & 64\% \\ \hline
547 - 1151& 59 & 59\% & 59 & 59\% \\ \hline
1153 - 1987& 63 & 63\% & 61 & 61\% \\ \hline
\end{tabular}
\end{center}

\begin{center}
\begin{tabular}{ | l | p{4cm} | c | p{4cm} | c | }
\hline                        
& Number of $p$ such that $A_n$ is complete mod $p$& \% & Number of $p$ such that $B_n$ is complete mod $p$& \% \\ \hline
2 - 541& 62 & 62\% & 64 & 64\% \\ \hline
2 - 1151& 121 & 60.5\% & 123 & 61.5\% \\ \hline
2 - 1987&  184 & 61.3\% & 184 & 61.3\% \\ \hline
\end{tabular}
\end{center}

This is not an adequate sample to make the conjecture that the percentage approaches 61\%, but we see enough defectiveness to offer the following conjecture:

\begin{proof}[Conjecture]
Given $\{T_n(a,b,c)\}$, let $P_n$ be the set of the first $n$ primes and let $C_n = M_{(a,b,c)} \cap P_n$. Then $\lim_{n\to\infty}\frac{|C_n|}{|P_n|} < 1$.
\end{proof}

A point to consider that may be of assistance in future work is the lengths of Tribonacci cycles. In the Fibonacci sequence, $\{F_n\}$ had length $p-1$ for all primes $p \equiv \pm 1$ (mod 10) (a result of Wall). This made all such primes defective, since their cycle length was less than the number of residues required for completeness. This phenomenon of primes $p$ having length $p-1$ occurs similarly in Tribonacci sequences, but not with the same regularity or frequency. If some pattern could be determined for which primes are defective in this way, a conjecture could be made about an upper limit for the percentage of primes that are complete for a given Tribonacci sequence.

If future work in the area of Tribonacci sequences is to follow the pattern of Gibonacci sequences, then the following points should be considered:

\begin{enumerate}
\item{Work should be started with one individual Tribonacci sequence. The wealth of knowledge surrounding the Fibonacci sequence made it a good place to start a discussion of Gibonacci sequences, and that knowledge carried into this paper. We suggest the sequence $B_n = \{1, 1, 2, 4, 7, 11, 18, ...\}$. Extrapolation backwards shows that it contains two zeros, since 2 - 1- 1 = 0 and 1 - 1 - 0 = 0. The Fibonacci sequence was the only Gibonacci sequence that contained 1 zero, since any other sequence with zero in it would be a multiple of the Fibonacci sequence and hence not a Gibonacci sequence. The same will hold true for $B_n$, which will be the only Tribonacci sequence containing two consecutive zeros. Thus it is likely the Tribonacci sequence most similar to the Fibonacci sequence.}
\item{Special attention should be given to the length of the sequence's cycles mod $p$ for prime $p$. When this is determined, the next step is to consider powers of primes and whether the sequence is complete mod $p^j$ for all $j$. For example, both $A_n$ and $B_n$ are complete mod $5, 5^2, 5^3, 5^4$ and $5^5$. This could lead one to conjecture that they are complete for all $5^j$, and a study of complete Tribonacci systems mod $5^j$ would aid in that endeavor.}
\item{Work should be done to determine whether or not there is a Tribonacci invariant, and if so, what form it takes. This would be much like the Gibonacci invariant, a quantity based on $n$ that holds true for all $T_n$ in each individual Tribonacci sequence. This could prove useful in the work for both of the previously mentioned points.}
\item{Once a large amount is known about the completeness or defectiveness of one sequence mod $m$, that information can be used to aid in determining the completeness or defectiveness of other sequences mod $m$, as this paper has done with Gibonacci sequences.}
\end{enumerate}

\pagebreak

\section{Appendix A: Burr's Lemma 3}

Here we present Burr's proof of Lemma 3 with our own additions for the sake of clarity. We have modified Burr's text to correct several misprints and to match the language we have used in this paper. Our comments will appear as indented, italicized blocks.

\begin{proof}[Lemma 3 (Burr)]
Suppose that the Fibonacci cycle mod $m$ has length $k$, and that it has length $5k$ mod $5m$. For some $n$ and $a$ let $F_n \equiv a$ (mod $m$). Then $F_n, F_{k+n}, ..., F_{4k+n}$ are congruent to $a, m+a, ..., 4m+a$ (mod $5m$) in some order.
\end{proof}

\begin{proof}
We consider two cases, depending on whether or not $5 \mid m$. We first assume $5 \nmid m$. Then the length of $F_n$ mod $5m$ is the lcm of $k$ and the period of 5, which is 20.

\begin{quote}
\emph{This follows directly from Wall's Theorem 2.}
\end{quote}

Since this length is to equal $5k$, we have $k \equiv 4, 8, 12, 16$ (mod 20).

\begin{quote}
\emph{We have lcm(20, $k$)= $5k$ by hypothesis, which means that $k$ must have a factor of 4 mod 20. We reject $k \equiv 0$ (mod 20) because then lcm(20, $k$) = $k$, not $5k$.}
\end{quote}

Now, the Fibonacci cycle mod 5 is (0, 1, 1, 2, 3, 0, 3, 3, 1, 4, 0, 4, 4, 3, 2, 0, 2, 2, 4, 1). From this it may be verified that $F_n, F_{k+n}, ..., F_{4k+n}$ are congruent modulo 5 to 0, 1, 2, 3, 4 in some order. For instance, if $n \equiv 0$ (mod 20) they are congruent respectively to 0, 3, 1, 4, 2. Since each of these is congruent to $a$ modulo $m$, they are congruent in some order to $a, m+a, ..., 4m+a$. This completes the first case.

\begin{quote}
\emph{The assertion in the first half of this paragraph follows from arithmetical observation of the Fibonacci cycle mod 5. Pick some $k \equiv 4, 8, 12$, or 16 (mod 20) and some $n$, and the set $F_n, F_{k+n}, ..., F_{4k+n}$ will be congruent to 0, 1, 2, 3, 4 (mod 5) in some order.}

\emph{The concluding assertion follows because each of $F_n, F_{k+n}, ..., F_{4k+n}$ is congruent to $a$ mod $m$. This is given, since by hypothesis $\{F_n\}$ has length $k$ mod $m$, meaning if we add $k$ to any indice $n$, then $F_n \equiv F_{k+n}$ (mod $m$). Combining this with the previous statement, we get $F_n, F_{k+n}, ..., F_{4k+n}$ congruent to $a, m+a, ..., 4m+a$ (mod $5m$) in some order.}

\emph{Thus the proof of this case solely relies on the nature of the Fibonacci cycle mod 5, and we can apply it to any Gibonacci sequence which has the same Gibonacci cycle mod 5 as the Fibonacci sequence.}
\end{quote}

We now assume $5 \mid m$. Since the Fibonacci sequence has length $k$ modulo $m$, $F_n, F_{k+n}, ..., F_{4k+n}$ are all congruent to $a$ mod $m$ and hence are each congruent to $im + a$ mod $5m$ for some choice of $0 \leq i \leq 4$.

\begin{quote}
\emph{Each must take the form $im + a$, but we don't have uniqueness of $i$ since 5 and $m$ are not relatively prime.}
\end{quote}

Our object is to show that the value of $i$ is different for each of the five terms. Set $F_{n+1} \equiv b$ (mod $m$). Then $F_{n+1}, F_{k+n+1}, ..., F_{4k+n+1}$ are each congruent to $jm + b$ mod $5m$ for some $0 \leq j \leq 4$. Speaking in terms of the concept we have defined, there are 25 pairs congruent modulo $5m$ to $(im + a, jm + b)$ appearing within a complete Gibonacci cycle mod $5m$, of which 5 appear in the cycle corresponding to the standard Fibonacci sequence.

\begin{quote}
\emph{The conditions on $j$ follow similarly as for $i$. There are 25 pairs congruent modulo $5m$ to $(im + a, jm + b)$ because there are 5 possible choices for both $i$ and $j$, and $5 \cdot 5 = 25$. The fact that 5 of them appear in the standard Fibonacci sequence mod $5m$ follows because we have the pair ($a, b$) in the Fibonacci cycle mod $m$. By hypothesis, the length of the Fibonacci cycle mod $5m$ is 5 times as long as its length mod $m$, so this pair repeats 5 times within the Fibonacci cycle mod $5m$ when that cycle is considered mod $m$. Thus there are 5 pairs taking the form $(im + a, jm + b)$ mod $5m$ within the Fibonacci cycle mod $5m$.}

\emph{This will also follow for any Gibonacci sequence of the same length as the Fibonacci sequence mod $5m$.}
\end{quote}

Our objective is to show that each of these 5 gives a different value of $i$.

Since

\begin{equation}
a^2 + ab - b^2 \equiv \pm 1 (mod \ m),
\end{equation}

\begin{quote}
\emph{This follows since the Fibonacci invariant is 1.}
\end{quote}

We may set

\begin{equation}
a^2 + ab - b^2 = mA \pm 1.
\end{equation}

\begin{quote}
\emph{For some $A$.}
\end{quote}

Applying this same invariant to the pair ($im + a, jm + b$), we have

$$
\begin{array}{rll}
& &(im + a)^2 + (im + a)(jm + b) - (jm + b)^2 \\
& = & i^2m^2 + ijm^2 -j^2m^2 + ((2a + b)i + (a - 2b)j)m + a^2 +ab - b^2 \\
& = & m^2(i^2 + ij - j^2) + m((2a + b)i + (a - 2b)j) + mA \pm 1. \\
\end{array}
$$

\begin{quote}
\emph{Through multiplication, simplification and substitution of $mA \pm 1$.}
\end{quote}

The last expression will be congruent to $\pm 1$ (mod $5m$) if and only if

\begin{equation}
(2a + b)i + (a - 2b)j + A \equiv 0 \ (mod \ 5).
\end{equation}

\begin{quote}
\emph{We know $m^2(i^2 + ij - j^2)$ is congruent to 0 mod $5m$ since $5 \mid m$, giving $5m \mid m^2$. Thus the whole expression will be congruent to $\pm 1$ mod $5m$ if and only if $m((2a + b)i + (a - 2b)j) + mA \equiv 0$ (mod $5m$), which is equivalent to saying $(2a + b)i + (a - 2b)j + A \equiv 0$ (mod 5).}

\emph{This argument will also apply so long as $a^2 + ab - b^2 \not\equiv 0$ (mod 5), since the $\pm 1$ would just be replaced by $\pm L$ (mod m), where $L$ is the Gibonacci invariant of $\{G_n(a,b)\}$}
\end{quote}

However, $2a + b \not\equiv 0$ (mod 5) since otherwise

\begin{equation}
\pm 1 \equiv a^2 + ab - b^2 \equiv a^2 - 2a^2 -4a^2 \equiv 0 \ (mod \ 5);
\end{equation}

similarly $a - 2b \not\equiv 0$ (mod 5).

\begin{quote}
\emph{This derives a contradiction through direct substitution of $-2a$ for $b$ into the Fibonacci invariant in the former case, and $2b$ for $a$ in the latter case.}
\end{quote}

Consequently, for each of the 5 possible choices of $i$, there is exactly one $j$ satisfying the above congruence. Hence only these 5 pairs could appear as consecutive pairs in the Fibonacci sequence. Since $i$ is different in each case, the lemma is proved.

\begin{quote}
\emph{Because $(2a + b)$ and $(a - 2b)$ are not congruent to 0 mod 5, there is no way to cancel the $i$ and $j$ out of the expression in question. Thus, for each $i$, we have $j$ as unique for a previously determined $a, b,$ and $A$. If the same $i$ appears more than once, then its second occurrence will have the same $j$ as the previous occurrence, and hence the Fibonacci cycle mod $5m$ will repeat before its length of $5k$. Thus $i$ is different for each of the 5 pairs.} $\blacksquare$
\end{quote}
\end{proof}

\pagebreak

\end{document}